\documentclass{amsart}

\usepackage{amsmath,amssymb,mathrsfs,amsthm,graphicx,colonequals,hyperref}

\hypersetup{colorlinks=true,urlcolor=magenta,citecolor=magenta,linkcolor=blue}

\newcommand{\Gal}{\operatorname{Gal}}
\newcommand{\tr}{\operatorname{tr}}

\newcommand{\R}{\operatorname{\mathbf{R}}}
\newcommand{\Q}{\mathbf{Q}}
\newcommand{\ddef}{\colonequals}
\newcommand{\disc}{\operatorname{disc}}

\theoremstyle{plain}
\newtheorem{thm}[equation]{Theorem}
\newtheorem{lem}[equation]{Lemma}

\newtheorem{prop}[equation]{Proposition}

\theoremstyle{remark}
\newtheorem{rmk}[equation]{Remark}

\newtheorem{exm}[equation]{Example}

\begin{document}

\title{Hasse-Witt invariants for trace forms of Jacobi polynomials}

\author{John Cullinan, Farshid Hajir, Elisabeth Young}

\address{Department of Mathematics, Bard College, Annandale-On-Hudson, NY 12504, USA}
\email{cullinan@bard.edu}
\urladdr{\url{http://faculty.bard.edu/cullinan/}}

\address{Department of Mathematics and Statistics, University of Massachusetts Amherst, Amherst, MA 01003, USA}
\email{hajir@math.umass.edu}

\address{Department of Mathematics, Bard College, Annandale-On-Hudson, NY 12504, USA}
\email{ey0200@bard.edu}

\keywords{Jacobi Polynomial, Hankel Determinant, Hasse-Witt Invariant}
\subjclass{11R32, 15A15}

\begin{abstract}
In \cite{feit}, Feit used the Generalized Laguerre Polynomials (GLP) to prove that the groups $\widetilde{A}_{5}$ and $\widetilde{A}_{7}$ occur as Galois groups over $\Q$.  Hajir, in \cite{hajir} extended these results to prove that $\widetilde{A}_{n}$ is Galois over $\Q$ whenever $n \equiv 1 \pmod{8}$.  A key ingredient of both proofs is the explicit determination of the Hasse-Witt invariant of (the diagonalization of) the trace form of the root fields of the GLP, which relies on the calculation of a certain determinant, $\Delta_t$.  The explicit formula for $\Delta_t$ used in \cite{feit} and \cite{hajir} was derived using properties specific to the GLP which do not generalize to other polynomials.

In this paper we revisit Feit's original calculation of $\Delta_t$ and situate it in the context of Hankel determinants.  We give an alternate derivation of $\Delta_t$ using standard combinatorial arguments and then apply these results to the Jacobi polynomials, a two-parameter family of orthogonal polynomials encompassing the GLP as a special case.  We compute an explicit formula for the $\Delta_t$ of the Jacobi polynomials as well as the associated Hasse-Witt invariant.  The techniques used in this paper are not specific to the Jacobi polynomials and are widely applicable. 
\end{abstract}

\maketitle

\section{Introduction} \label{Introduction}

\subsection{Motivation}

Let $A$ and $B$ be finite groups.  The group extension problem for the ordered pair $(A,B)$ is to determine all possible groups $G$ such that $A \vartriangleleft G$ and $G/A \simeq B$. The possible group extensions are well known to be classified by certain cohomology groups, which may be difficult to compute based on the properties of $A$ and $B$ (abelian or not, coprime order, etc.).   

The group extension problem has a natural application to Galois theory.  If $L/K$ is a Galois extension of fields with Galois group $B$, one can ask for the field-theoretic conditions that allow $L$ to be embedded in a field $M$ such that $\Gal(M/K) \simeq G$ and $\Gal(M/L) \simeq A$.

\begin{exm}
Let $C_n$ denote the cyclic group of order $n$.  Then $\Q(\sqrt{d})$, which has Galois group $C_2$ over $\Q$, can be embedded in a Galois extension $L/\Q$ with Galois group $C_4$ if and only if $d$ is the sum of two nonzero rational squares \cite[Thm.~1.2.4]{serre}.
\end{exm}

In \cite{feit}, Feit considered the cases where $A = C_2$ and $B = A_5$ or $A_7$ and solved the Galois embedding problem for the nontrivial central extensions $G = \widetilde{A}_5$ or $\widetilde{A}_7$, respectively.  In fact, Feit gave a parameterized family of polynomials with Galois group $A_5$ (resp.~$A_7$) solving the Galois embedding problem for $\widetilde{A}_5$ (resp.~$\widetilde{A}_7$).   In \cite{hajir}, Hajir extended the construction of Feit and showed that $\widetilde{A}_n$ is Galois over  $\Q$ whenever $n \equiv 1 \pmod{8}$.  We briefly recall their methods now.

\subsection{The Hasse-Witt Invariant} For complete details of this construction, see \cite[\S 2,3]{feit} and \cite{serre}.  Let $f \in \Q[x]$ be an irreducible polynomial of degree $n$ and set $E = \Q[x]/(f(x))$.  Fix an algebraic closure $\overline{\Q}$ of $\Q$ and let $\theta_1,\dots,\theta_n \in \overline{\Q}$ denote the roots of $f$.   Define the sequence of \textbf{power-sums} $\left\{s_k \right\}_{k=0}^\infty$ of the $\theta_j$  by 
\[
s_k = \sum_{j=1}^n \theta_j^k, 
\]
and let $\Delta_{t}$ be the principal $t \times t$ minor of the array $(s_{i+j})_{i,j \geq 0}$:
\begin{align}  \label{feit_minor}
\Delta_{t} \ddef \det \left(s_{i+j}\right)_{0 \leq i,j \leq t-1}.
\end{align}

\begin{rmk} \label{disc_rem}
It is well known (see, \emph{e.g.}, \cite[Thm.~2.2]{feit}) that $\Delta_n = \disc (f)$.
\end{rmk}

The linear transformation $Q_f:E \to E$ defined by $\gamma \mapsto \tr(\gamma^2)$ is a nondegenerate quadratic form on $E$.  By \cite[Thm.~2.2]{feit}, $Q_f$ is equivalent to the diagonal form
\begin{align} \label{diag}
\sum_{t=0}^{n-1} D_tx_t^2,
\end{align}
where  $D_t = \Delta_{t+1}/\Delta_{t}$ for $0 \leq t \leq n-1$.  

Let $p$ be a prime number and let $(~,~)_p: \Q_p^\times \times \Q_p^\times \to \lbrace \pm 1 \rbrace$ be the associated  Hilbert symbol (for properties of the Hilbert symbol, see \cite[\S 3]{feit}).  With the $D_t$ as above, define
\[
\epsilon_p(Q) = \prod_{0 \leq i < j \leq n-1} (D_i,D_j)_p.
\]
Following the notation of \cite[\S3]{feit}, we will write $\epsilon_p(E) = \epsilon_p(f(x)) = \epsilon_p(Q_f)$.  Then $\epsilon_p(E)$ is known as the \textbf{Hasse-Witt invariant} of the quadratic form $Q_f$, and its utility in Galois embedding problems is exemplified by the following theorem of Serre (appearing as Theorem 3.2 of  \cite{feit} and developed in generality in  \cite[\S2]{serre}).

\begin{thm} \label{serre_thm}
Let $E$ be a finite extension of $\Q$ and let $\widehat{E}$ denote its Galois closure in some algebraic closure.  Assume that $\widehat{E}$ has square discriminant.  Thus $G = \Gal(\widehat{E}/\Q) \subseteq A_n$, where $n = [E:\Q]$.  Suppose $\widetilde{G}$ is a nonsplit extension of $G$.  Then the following are equivalent.
\begin{enumerate}
\item There exists a quadratic extension field $M$ of $\widehat{E}$ which is a Galois extension of $\Q$ with $\Gal(M/\Q) \simeq \widetilde{G}$.
\item $\epsilon_p(E) =1$ for all finite primes $p$ and for the Archimedean prime $p = \infty$.
\end{enumerate}
\end{thm}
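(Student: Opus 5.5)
The plan is to follow Serre's cohomological approach (\cite[\S2]{serre}), reducing the embedding problem to the vanishing of a class in $\mathrm{Br}_2(\Q) = H^2(\Q,\pm 1)$. Let $\rho \colon \Gal(\overline{\Q}/\Q) \to G \subseteq A_n \subseteq S_n$ be the permutation representation on the roots of $f$. Since $\widetilde{G}$ is assumed to be the nonsplit central extension of $G$ pulled back from $\widetilde{A}_n$, it corresponds to the class $\rho^*(\tilde e) \in H^2(\Q,\pm1)$, where $\tilde e \in H^2(A_n,\pm 1)$ classifies $\widetilde{A}_n$. The standard lifting criterion then applies: the embedding problem has a solution, meaning a homomorphism $\Gal(\overline{\Q}/\Q) \to \widetilde{G}$ lifting $\rho$, if and only if $\rho^*(\tilde e)=0$.

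To make this an equivalence with statement (1), I need two observations about the lift. First, because the extension is nonsplit and $\ker(\widetilde{G}\to G)=\{\pm1\}$ is central of order $2$, the lift is automatically surjective. Its image is a subgroup mapping onto $G$, so it is either all of $\widetilde{G}$ or a complement, and a complement would split the extension. Second, the fixed field $M$ of the kernel of the lift is then a quadratic extension of $\widehat{E}$ with $\Gal(M/\Q)\simeq \widetilde{G}$. Conversely, any $M$ as in (1) yields such a lift.

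The main step is Serre's formula computing $\rho^*(\tilde e)$ in terms of the trace form:
\[
w_2(Q_f) = \rho^*(\tilde s_n) + (2)\cdot(d_E),
\]
where $w_2$ is the Hasse--Witt class, $\tilde s_n$ is the class of $\widetilde{S}_n$, and $d_E$ is the discriminant. Since the discriminant is a square, the correction term $(2)\cdot(d_E)$ vanishes, and restricting from $S_n$ to $A_n$ gives $\rho^*(\tilde e)=w_2(Q_f)$. The proof of this formula is the main obstacle. I would argue by reduction to the universal case using the orthogonal representation $S_n\to O_n$ and the Stiefel--Whitney class of the twisted form $\langle1,\dots,1\rangle$. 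The key input is that the trace form is the twist of the unit form $x_1^2+\cdots+x_n^2$ by the cocycle $\rho$. One then compares the pullback of $\mathrm{Pin}$ extensions with the extension $\widetilde{S}_n$, and the discrepancy between the two produces the $(2)(d)$ term. I would simply cite this from \cite{serre} rather than reprove it.

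Finally, by the Hasse principle for the Brauer group, the class $w_2(Q_f) \in \mathrm{Br}_2(\Q)$ vanishes if and only if all of its local invariants vanish, at every finite prime and at $\infty$. The local invariant at $p$ of $w_2$ of the diagonal form (\ref{diag}) is exactly $\prod_{i<j}(D_i,D_j)_p=\epsilon_p(E)$, using the normalization of \cite[\S3]{feit}. Combining these steps gives (1)$\iff$(2).
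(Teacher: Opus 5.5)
Your proposal is correct. It is essentially Serre's argument, which the paper does not reprove but only cites (via \cite[Thm.~3.2]{feit} and \cite{serre}). The steps match: the embedding obstruction $\rho^*(\tilde e)\in \mathrm{Br}_2(\Q)$; Serre's formula $w_2(Q_f)=\rho^*(\tilde s_n)+(2)\cdot(d_E)$, whose correction term vanishes because the discriminant is a square; and the Hasse principle for $\mathrm{Br}_2(\Q)$, with local invariants $\epsilon_p(E)$.

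You read $\widetilde G$ as the preimage of $G$ in $\widetilde A_n$, and the isomorphism in (1) as compatible with the projection onto $\Gal(\widehat E/\Q)$; your converse step tacitly uses the latter. That is the intended reading. The paper's phrase ``a nonsplit extension of $G$'' is looser than what the argument actually requires.
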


In this paper we first recall Feit's calculation of the explicit form of the diagonalization of $Q_f$ for a one-parameter family of polynomials (Generalized Laguerre) and then introduce a new method to do the same for a two variable family (Jacobi), giving us the following result.

\begin{thm} \label{explicit_diag}
For the Jacobi polynomial $f(x) = \mathscr{J}^{(\alpha, \beta)}_n(x)$, the diagonalization $Q_f$ of the trace form of $f$ is given by $\sum_{t=0}^{n-1} D_t x_t^2$, where
\[
D_t = \frac{n! \prod_{j=n-t+1}^n (\alpha + j)(\beta+j)}{(n-t-1)! (\alpha + \beta + 2n-2t+1) \prod_{j=2n-2t+2}^{2n}(a+b+j)}.
\]
\end{thm}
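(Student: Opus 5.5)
We need to compute the Hankel determinants $\Delta_t=\det(s_{i+j})_{0\le i,j\le t-1}$ of the power sums of the roots of $f=\mathscr{J}^{(\alpha,\beta)}_n$ and take the ratios $D_t=\Delta_{t+1}/\Delta_t$. (The $a,b$ in the displayed formula presumably stand for $\alpha,\beta$.) The plan is to read $\Delta_t$ as a moment determinant and use the standard orthogonal-polynomial/continued-fraction machinery. Since $f$ has degree $n$, the generating function of power sums is
\[
\sum_{k\ge0} s_k z^{k}=\sum_{j=1}^n\frac{1}{1-\theta_j z}=\frac{n - z\,\tilde f'(z)}{\tilde f(z)}\cdot(\ldots),
\]
or equivalently $\sum_k s_k x^{-k-1}=f'(x)/f(x)$. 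So $(s_k)$ is the moment sequence of the discrete measure $\mu=\sum_j\delta_{\theta_j}$, and $\Delta_t$ is the Hankel determinant of $f'/f$.

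\textbf{Step 1: reduce to a J-fraction.} By the classical theorem (Heilermann/Flajolet), if
\[
\frac{f'(x)}{f(x)}=\cfrac{b_0}{x-a_0-\cfrac{b_1}{x-a_1-\cfrac{b_2}{x-a_2-\cdots}}},
\]
then $\Delta_t=b_0^{t}b_1^{t-1}\cdots b_{t-1}$. Hence $D_t=\Delta_{t+1}/\Delta_t=b_0b_1\cdots b_t$ for $t\le n-1$. Equivalently, $D_t=\langle P_t,P_t\rangle_\mu=\sum_j P_t(\theta_j)^2$, where the $P_t$ are the monic orthogonal polynomials for $\mu$. These are obtained by running the Euclidean algorithm on $(f,f')$: we have $P_n=f$ and $P_{n-1}=f'/n$, and the lower $P_t$ come from the three-term recurrence $P_{t+1}=(x-a_t)P_t-b_tP_{t-1}$ read backwards. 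Thus $b_t$ equals minus the leading coefficient of the remainder at each Euclidean step, normalized.

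\textbf{Step 2: identify the Euclidean remainders.} The key structural fact I will try to prove is that the remainder sequence of $(f,f')$ consists, up to scalars, of derivatives of $f$: $P_{n-k}\propto \mathscr{J}^{(\alpha+k,\beta+k)}_{n-k}$. This holds because $\frac{d}{dx}\mathscr{J}^{(\alpha,\beta)}_m=\frac{m+\alpha+\beta+1}{2}\mathscr{J}^{(\alpha+1,\beta+1)}_{m-1}$. We then need a three-term relation of the form
\[
\mathscr{J}^{(\alpha,\beta)}_{m}=(A x+B)\,\mathscr{J}^{(\alpha+1,\beta+1)}_{m-1}+C\,\mathscr{J}^{(\alpha+2,\beta+2)}_{m-2},
\]
which I will verify from the hypergeometric representation (or from the differential equation $(1-x^2)y''+(\beta-\alpha-(\alpha+\beta+2)x)y'+m(m+\alpha+\beta+1)y=0$, expressing $(1-x^2)y''$ via the Jacobi relations). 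Iterating with shifted parameters $\alpha+k,\beta+k$, $m=n-k$ gives every $b_{t}$ in closed form as a rational function of $n,t,\alpha,\beta$. The factors $(\alpha+j)(\beta+j)$ and $(\alpha+\beta+j)$ should come from $C$ and from the leading coefficients $\binom{2m+\alpha+\beta}{m}2^{-m}$.

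\textbf{Step 3: telescope.} With $b_0=n$ (since $s_0=n$), the formula is $D_t=\prod_{i\le t}b_i$. I will show the product telescopes to the stated expression, checking the cases $t=0$ (giving $n$) and $t=1$ directly against $s_0s_2-s_1^2$ computed from the first coefficients of $f$.

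The main obstacle is Step 2. We must pin down the exact constant $C$ in the contiguous relation, including the normalization and the sign, since any error propagates through the product. I would first derive it by comparing the top three coefficients of each side, which suffices because the relation is known to exist by degree and orthogonality considerations. Note that the measure $\mu$ is not the Jacobi weight, so one cannot simply cite the standard Jacobi recurrence; the identity must be established for the derivative chain itself.
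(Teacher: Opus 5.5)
The gap is in Step 2: the ``key structural fact'' is false, and the contiguous relation you plan to fit does not exist. Take $n=3$, $(\alpha,\beta)=(0,1)$. Then $f=x^3-\frac97x^2+\frac37x-\frac1{35}$ and $f'/3=x^2-\frac67x+\frac17=\mathscr{J}^{(1,2)}_2$. Division gives $f=(x-\frac37)\,\frac{f'}{3}-\frac{4}{49}(x-\frac25)$, so the remainder is $\mathscr{J}^{(1,2)}_1=x-\frac25$. By contrast, $f''=6(x-\frac37)$ is a multiple of $\mathscr{J}^{(2,3)}_1$. Since $A,B$ are forced by the top two coefficients, no identity $\mathscr{J}^{(\alpha,\beta)}_m=(Ax+B)\mathscr{J}^{(\alpha+1,\beta+1)}_{m-1}+C\,\mathscr{J}^{(\alpha+2,\beta+2)}_{m-2}$ holds in general.

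Fitting $C$ from the top three coefficients does give the correct $b_1$, because $b_1$ depends only on $s_0,s_1,s_2$. Iterating along the derivative chain then goes wrong from $b_2$ on: here it gives $2/49$, while the true $b_2$ is $1/25$.

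What is true is that the remainders stay in a single family. Since $x^{\alpha+1}(1-x)^{\beta+1}=x(1-x)\cdot x^{\alpha}(1-x)^{\beta}$, $f$ is orthogonal to all polynomials of degree $\le n-3$ for the $(\alpha+1,\beta+1)$ weight (for $\alpha,\beta>-1$, then identically in the parameters). Hence $f=(x-c)\frac{f'}{n}-b_1\mathscr{J}^{(\alpha+1,\beta+1)}_{n-2}$ for a constant $c$. From then on the Euclidean algorithm is just the three-term recurrence of the monic family $\mathscr{J}^{(\alpha+1,\beta+1)}_m$ run downward. So $b_1=\frac{(n-1)(\alpha+n)(\beta+n)}{(\alpha+\beta+2n)^2(\alpha+\beta+2n-1)}$, and for $k\ge2$, $m=n-k$,
$b_k=\frac{m(m+\alpha+1)(m+\beta+1)(m+\alpha+\beta+2)}{(2m+\alpha+\beta+2)^2(2m+\alpha+\beta+1)(2m+\alpha+\beta+3)}$.

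These are exactly the products $a_{2k-1}a_{2k}$ of Theorem~\ref{elis_version}. Proposition~\ref{f(n-l)_prop} says the same thing in S-fraction language: the even-indexed $f_{2i}$ are the coefficients of $\mathscr{J}^{(\alpha+1,\beta+1)}_{n-i-1}$. So your remark that the classical recurrence cannot be cited is backwards. After one step it is exactly what is needed, and the corrected J-fraction route is shorter than the paper's recursive $f_i$ plus Krattenthaler's formula.

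There are two smaller slips. First, the Euclidean remainders of $(f,f')$ are not the orthogonal polynomials of $\mu$; for roots $0,1,3$, $P_2=x^2-\frac{22}{7}x+\frac67\neq f'/3$. They are the tail denominators of the J-fraction, although reading $b_1,b_2,\dots$ off them remains legitimate. Second, in the monic normalization $\frac{d}{dx}\mathscr{J}^{(\alpha,\beta)}_m=m\,\mathscr{J}^{(\alpha+1,\beta+1)}_{m-1}$.

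Separately, your sanity checks at $t=0,1$ will fail against the displayed formula, because it does not match what the paper proves. The paper gets $D_t=\Delta_{t+1}/\Delta_t$ from $\Delta_t=A_tB_t/C_t$ (Theorem~\ref{main_thm}). That gives $D_0=s_0=n$, whereas the display gives $n/(\alpha+\beta+2n+1)$. It gives $D_1=\frac{n(n-1)(\alpha+n)(\beta+n)}{(\alpha+\beta+2n)^2(\alpha+\beta+2n-1)}$, whereas the display lacks one factor $\alpha+\beta+2n$. In general, for $1\le t\le n-1$,
\[
D_t=\frac{n!\,\prod_{j=n-t+1}^{n}(\alpha+j)(\beta+j)\,\prod_{j=n-t+2}^{n}(\alpha+\beta+j)}{(n-t-1)!\,(\alpha+\beta+2n-2t+1)\prod_{j=2n-2t+2}^{2n}(\alpha+\beta+j)^2}.
\]
So the display drops the middle numerator product and the square in the denominator. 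This corrected expression is what $b_0b_1\cdots b_t$ telescopes to with the $b_k$ above. In the example, $D_1=12/49$ and $D_2=12/1225$, while the display gives $12/7$ and $18/35$. Aim your Step~3 at the corrected formula.
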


At the end of the paper we also apply our method to give an alternative derivation of Feit's theorem.   We now describe these polynomials in detail.

\subsection{The Generalized Laguerre Polynomials} One of the barriers to applying Theorem \ref{serre_thm} in practice is the calculation of the $D_i$ in the trace form above.  This amounts to determining an explicit formula for all $t \times t$ principal minors of the $n \times n$ power-sum matrix of the roots of the defining polynomial of the root field $E$.  In \cite{feit}, Feit considered the one-parameter family $\lbrace F_n^{(\lambda)}(x)\rbrace_{n=0}^\infty$ of \textbf{Generalized Laguerre Polynomials}, defined by
\[
F_n^{(\lambda)}(x) = \sum_{j=0}^n (-1)^{n-j} \binom{n}{j} \left(\prod_{i=j+1}^n \lambda + i \right)x^j.
\]
Using properties specific to the $F_n^{(\lambda)}(x)$, such as their three-term recursion relations and certain congruences, Feit determined $\Delta_{t}$ explicitly for all $t$ and, as a consequence, gave specific instances where $\epsilon_p(E) = 1$.   This was the major computational component to \cite{feit}, and is how he produced infinitely many number fields $\widehat{E}$ (depending on $\lambda$) with Galois group $A_5$ or $A_7$, that embed into $\widetilde{A}_5$- or $\widetilde{A}_7$-extensions, respectively.  In \cite{hajir}, Hajir generalized the work of Feit and used the polynomials $F_n^{(-2-n)}(x)$ to obtain $\widetilde{A}_n$-Galois extensions of $\Q$ when $n \equiv 1 \pmod{8}$. 

\subsection{Current Work} In this paper we expand the work of Feit and Hajir to the \textbf{Jacobi Polynomials}, a two-parameter family of orthogonal polynomials generalizing the Generalized Laguerre Polynomials.  The additional parameter of the Jacobi polynomials will mean that for each $n \geq 2$ there may be infinitely many solutions to the group extension problem for $\widetilde{A}_n$, thereby generalizing the results of Feit and Hajir. The main difficulty is that the explicit techniques of Feit to compute the $\Delta_t$ do not apply to the Jacobi Polynomials.  

By contrast, we determine the $\Delta_{t}$ for the Jacobi polynomials by recasting the problem in terms of Hankel determinants and then applying known combinatorial techniques to evaluate the minors.   As a consequence of our work on the Jacobi Polynomials we will get a new proof of \cite[Thm.~5.1]{feit} on the formula for $\Delta_t$ for the Generalized Laguerre Polynomials.  Our method to compute $\Delta_t$ is applicable to any sequence of polynomials indexed by degree and makes no use of orthogonality.  

Recall that the Jacobi Polynomials $P_n^{(\alpha,\beta)}(x)$ are a two-parameter family of orthogonal polynomials defined explicitly as
\[
P_n^{(\alpha,\beta)}(x) = \sum_{j=0}^n \binom{n+\alpha}{n-j}\binom{n+\beta}{j} \left(\frac{x-1}{2}\right)^j\left( \frac{x+1}{2} \right)^{n-j}.
\]
The $P_n^{(\alpha,\beta)}(x)$ are orthogonal on $[-1,1]$ with respect to the weight function $w(x) = (1-x)^\alpha(1+x)^\beta$.

\begin{rmk}
If we set $L_n^{(\lambda)}(x) = F_n^{(\lambda)}(x)/n!$, then the relationship between the Jacobi and Generalized Laguerre polynomials is given by \cite[(5.3.4)]{szego}
\[
L_n^{(\alpha)}(x) = \lim_{\beta \to \infty}P_n^{(\alpha,\beta)}(1-2x/\beta).
\]
Other changes of variables and specializations show that the Hermite, Legendre, Ultraspherical, and Chebychev polynomials (of both kinds) can be viewed as subfamilies of the Jacobi polynomials (see \cite[\S 2.4]{szego} for the explicit relationships).
\end{rmk}

As written, the $P_n^{(\alpha,\beta)}(x)$ are not monic and have complicated expressions for their coefficients.  We will make a series of convenient transformations for studying their power-sum sequence. First, we define the shifted polynomials $J_n^{(\alpha,\beta)}(x)$ to be 
\[
J_n^{(\alpha,\beta)}(x) \ddef P_n^{(\alpha,\beta)}(2x+1),
\]
which are given explicitly \cite[4.21.2]{szego} by
\[
J_n^{(\alpha,\beta)}(x) = \sum_{j=0}^n \binom{n+\alpha}{n-j}\binom{n+\alpha + \beta +j}{j} x^j.
\]
Then we define the monic polynomials $\mathscr{J}_n^{(\alpha,\beta)}(x)$ by 
\begin{align} \label{jac_def}
\mathscr{J}_n^{(\alpha,\beta)}(x) &\ddef (-1)^n\binom{2n+\alpha + \beta}{n}^{-1} J_n^{(\alpha,\beta)}(-x) \\
&= \sum_{j=0}^n \binom{n}{j} \left( \prod_{k=j+1}^n \frac{\alpha + k}{n+\alpha + \beta + k} \right) (-1)^{n-j}x^j.
\end{align}
We define the following quantities as well:
\begin{align}
A_{t} &= \prod_{j=n-t+1}^n j^{j-(n-t)}, \label{ADEF} \\
B_{t} &= \prod_{j=n-t+2}^n \left[(\alpha + j)(\beta+j)\right]^{j-(n-t+1)} \left(\alpha + \beta + j\right)^{j-(n-t+2)}, \label{BDEF} \\
C_{t} &= \prod_{j=2n-2t+2}^{2n} (\alpha + \beta + j)^{j-(2n-2t+2)}. \label{CDEF}
\end{align}
With this notation in place, we can now state the main results of this paper.

\begin{thm} \label{main_thm}
Fix $n \geq 2$.  For the polynomial  $\mathscr{J}_n^{(\alpha,\beta)}(x)$, we have $\Delta_{t} = A_{t}B_{t}/C_{t}$, with $A_t$, $B_t$, and $C_t$  as defined in Equations (\ref{ADEF}), (\ref{BDEF}), and (\ref{CDEF}).
\end{thm}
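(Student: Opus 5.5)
The plan is to use the standard theory of Hankel determinants and reduce Theorem~\ref{main_thm} to a statement about successive ratios. Define the linear functional $L$ on $\Q(\alpha,\beta)[x]$ by $L(x^k) = s_k$, so that $L(g) = \sum_{j=1}^n g(\theta_j)$ and $\Delta_t = \det\bigl(L(x^{i+j})\bigr)_{0\le i,j\le t-1}$. It suffices to show that, for each $0 \le t \le n-1$, the ratio $\Delta_{t+1}/\Delta_t$ equals the quotient $A_{t+1}B_{t+1}C_t/(A_tB_tC_{t+1})$. A direct telescoping of (\ref{ADEF})--(\ref{CDEF}) shows this quotient is exactly the expression $D_t$ of Theorem~\ref{explicit_diag}. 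Since $\Delta_0 = 1 = A_0B_0/C_0$ (empty products), induction on $t$ then gives $\Delta_t = A_tB_t/C_t$. In this way the two main theorems are proved together.

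To compute the ratios I would use the monic ``orthogonal'' polynomials attached to $L$. Let
\[
p_t(x) = \Delta_t^{-1}\det\begin{pmatrix} s_0 & \cdots & s_t \\ \vdots & & \vdots \\ s_{t-1} & \cdots & s_{2t-1} \\ 1 & \cdots & x^t\end{pmatrix}.
\]
This is the unique monic polynomial of degree $t$ with $L(x^i p_t) = 0$ for $0 \le i < t$, and it satisfies $\Delta_{t+1}/\Delta_t = L(x^t p_t)$. These facts follow from expanding along the last row and from Cramer's rule, valid as long as all $\Delta_t \neq 0$, which holds for generic $\alpha,\beta$; the general case follows by polynomial identity in $\alpha,\beta$. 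The strategy is to \emph{guess} $p_t$ in closed form and then verify orthogonality. Comparing with $\mathscr{J}_n^{(\alpha,\beta)}$ itself, for which $p_n$ must equal $\mathscr{J}_n^{(\alpha,\beta)}$, I would look for $p_t$ as a terminating hypergeometric polynomial in which $n$ enters as a parameter. Concretely, the guess is a ${}_3F_2$ of the shape $\sum_j c_j x^j$ with $c_j$ a ratio of Pochhammer symbols in $j$, $t$, $n$, $\alpha$, $\beta$. I would pin it down by computing $p_1, p_2$ symbolically (using $s_0=n$, $s_1$, $s_2$ obtained from Newton's identities and the coefficients in (\ref{jac_def})).

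To evaluate $L$ on polynomials I would use $\sum_k s_k x^{-k-1} = f'(x)/f(x)$ with $f = \mathscr{J}_n^{(\alpha,\beta)}$, so that $L(g)$ is the residue at infinity of $g f'/f$. Equivalently, $L(g) = \sum_j g(\theta_j)$ can be written as a finite sum via Newton's identities applied to the explicit coefficients. A cleaner alternative is to use the hypergeometric differential equation satisfied by $f$, namely $x(1+x)f'' + (\cdots)f' + n(n+\alpha+\beta+1)f=0$ after the substitution $x\mapsto -x$. This yields a linear recurrence for the $s_k$ (a ``moment recurrence''), from which $L(x^i p_t)$ can be expressed as a single-sum hypergeometric expression. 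I expect the vanishing for $i<t$ and the value at $i=t$ to reduce to Chu--Vandermonde or Pfaff--Saalsch\"utz summations.

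The main obstacle is the correct guess of $p_t$ together with the accompanying summation identity. The functional $L$ is not a classical Jacobi moment functional (it is a discrete sum over roots), so $p_t$ is not simply a Jacobi polynomial of lower degree. If the direct guess fails, my fallback is Krattenthaler's lemma/LU-factorization approach. I would write $s_{i+j}$ explicitly as a sum of products, find explicit lower- and upper-triangular matrices $L_t, U_t$ (with hypergeometric entries) such that $L_t (s_{i+j}) U_t$ is diagonal, and verify the factorization by the same summation identities. In either route, the product of the resulting diagonal entries is then matched with $A_tB_t/C_t$.
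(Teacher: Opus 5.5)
\textbf{There is a genuine gap.} Your reduction is fine as far as it goes ($\Delta_{t+1}/\Delta_t=L(x^tp_t)$ and $p_n=\mathscr J_n^{(\alpha,\beta)}$), but it is standard and carries none of the content of the theorem. Everything is deferred to a closed form for $p_t$ that you never produce, and the form you propose does not exist, because the $p_t$ are not hypergeometric polynomials.

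This is already visible in the Laguerre limit $\beta^n\mathscr J_n^{(\alpha,\beta)}(x/\beta)\to F_n^{(\alpha)}(x)$. For $F_4^{(\lambda)}$ the degree-$3$ orthogonal polynomial is
\[
p_3(x)=x^3-(3\lambda+14)x^2+(3\lambda^2+23\lambda+46)x-(\lambda+2)(\lambda+3)(\lambda+4).
\]
At $\lambda=0$ you can check $L(x^ip_3)=0$ for $i=0,1,2$ directly from $s_0,\dots,s_5=4,16,112,928,8224,75136$. The middle coefficient $3\lambda^2+23\lambda+46$ is irreducible over $\Q$. If the Jacobi $p_t$ had coefficients given by ratios of Pochhammer symbols in $\alpha,\beta$, then rescaling $x\mapsto x/\beta$ and letting $\beta\to\infty$ would make each Laguerre coefficient a ratio of products of linear factors in $\lambda$. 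So no such ${}_3F_2$ exists, and there is no Chu--Vandermonde or Saalsch\"utz identity to verify.

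Your fallback is the same computation in disguise. If your $L_t(s_{i+j})U_t$ is diagonal with $L_t$ lower triangular, then $L_t(s_{i+j})$ is upper triangular. Hence row $k$ of $L_t$ is a multiple of the coefficient vector of $p_k$, and $L_t$ cannot have hypergeometric entries either. Also, the differential equation does not give the linear moment recurrence you expect. It yields a Riccati equation for $\mathscr J_n'/\mathscr J_n=\sum_k s_kx^{-k-1}$, i.e.\ quadratic convolution relations among the $s_k$.

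The missing idea is that you never need $p_t$ itself, only $L(x^tp_t)=n\,\omega_1\cdots\omega_t$, where the $\omega_k$ are the three-term recurrence coefficients of $L$. These can be read off from a continued fraction for the generating function. What is hypergeometric there are the coefficients of the successive remainders of that expansion, not the coefficients of the $p_t$. This is exactly the paper's route:
\begin{enumerate}
\item It expands $G(z)=\widehat{F'}/\widehat{F}$, with $F=\mathscr J_n^{(\alpha,\beta)}$, as an S-fraction through the recursively defined $f_i$ (Proposition~\ref{cont_frac_prop}).
\item It proves in Proposition~\ref{f(n-l)_prop} that each $f_i(m)$ is a hypergeometric term in $m$ with explicit ratio.
\item It reads off $a_i=f_i(n)$ and applies (\ref{kratt_form}).
\end{enumerate}
In your notation $\omega_k=a_{2k-1}a_{2k}$, so $\Delta_{t+1}/\Delta_t=n\prod_{k=1}^{t}a_{2k-1}a_{2k}$, and your induction then closes by telescoping. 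A shortcut to the $a_i$: $G(z)=n\,{}_2F_1(1-n,-n-\alpha;-2n-\alpha-\beta;z)/{}_2F_1(-n,-n-\alpha;-2n-\alpha-\beta;z)$ is a ratio of contiguous terminating ${}_2F_1$'s. One contiguous relation followed by Gauss's continued fraction gives the $a_i$ of Theorem~\ref{elis_version}.

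Finally, drop the claim that the telescoped quotient is the displayed $D_t$ of Theorem~\ref{explicit_diag}. The quotient equals $n$ at $t=0$ and, for $t\ge1$,
\[
\frac{n!\prod_{j=n-t+1}^{n}(\alpha+j)(\beta+j)\prod_{j=n-t+2}^{n}(\alpha+\beta+j)}{(n-t-1)!\,(\alpha+\beta+2n-2t+1)\prod_{j=2n-2t+2}^{2n}(\alpha+\beta+j)^{2}}.
\]
This disagrees with the displayed formula already at $t=0$, where that formula gives $n/(\alpha+\beta+2n+1)$. It also disagrees at $t=1$, by a factor $\alpha+\beta+2n$; a direct Newton-identity computation of $\Delta_2$ confirms the version above.
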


In \cite[Thm.~3.1]{feit}, Feit proved the general formula
\begin{align} \label{HW_general}
\epsilon_p(E) = \left\{ \prod_{t=1}^n \left( \Delta_{t-1},\Delta_{t} \right)_p \right\} \left( -1,\prod_{t=1}^n \Delta_{t} \right)_p,
\end{align}
where $(~,~)_p$ denotes the $p$-adic Hilbert symbol.  By applying Theorem \ref{main_thm} to Equation (\ref{HW_general}) we will obtain in Section \ref{HW_section} an explicit expression for the Hasse-Witt invariant of the root field of $\mathscr{J}_n^{(\alpha,\beta)}(x)$.  We then show how to recover \cite[Thm.~5.1]{feit} immediately from Theorem \ref{main_thm} and give several concrete examples pertaining to the Jacobi Polynomials.   

\section{Hankel Matrices}

\subsection{Determinants} A \textbf{Hankel matrix} is one that is constant along anti-diagonals:
\[
\begin{pmatrix} 
a & b & c & \cdots \\
b & c & d & \cdots \\
c & d & e &  \cdots \\
\vdots & \vdots & \vdots & \ddots
\end{pmatrix}.
\]
If $\lbrace \mu_k \rbrace_{k\geq 0}$ is a sequence, then we can define the associated $t \times t$ Hankel matrix via 
\[
\left( \mu_{i+j} \right)_{0 \leq i,j \leq t-1}.
\]
Given such a Hankel matrix, one is led to consider its determinant 
\begin{align} \label{general_det}
\det \left(\mu_{i+j}\right)_{0 \leq i,j \leq t-1}.
\end{align} 
In certain circumstances, if the sequence $\lbrace \mu_k \rbrace_{k \geq 0}$ is endowed with some extra structure, there are techniques for evaluating the determinant in (\ref{general_det}). We recall one of those circumstances now.

Let $\lbrace \mu_k \rbrace_{k \geq 0}$ be a sequence with generating function
\[
G(x) =  \sum_{k=0}^\infty \mu_kx^k.
\]
If we write $G(x)$ as a continued fraction of the form
\begin{align} \label{cont_frac}
G(x) = \sum_{k=0}^\infty \mu_kx^k  = \frac{\mu_0}{\displaystyle 1+  \frac{a_1x}{1+ \displaystyle \frac{a_2x}{1 + \cdots}}},
\end{align}
then by \cite[Thm.~30]{krattenhaler}, we have
\begin{align} \label{kratt_form}
\det \left( \mu_{i+j} \right)_{0 \leq i,j \leq t-1} = \mu_0^t \left( a_1a_2 \right)^{t-1}\left(a_3a_4 \right)^{t-2} \cdots \left(a_{2t-3}a_{2t-2}\right).
\end{align}
The minor $\Delta_{t}$ introduced above is a Hankel determinant, hence one approach to proving Theorem \ref{main_thm} is to determine the continued fraction expansion as in Equation (\ref{cont_frac}) for the generating function of the power-sums.

\begin{rmk}
In \cite{feit}, Feit considered the determinant 
\[
\det \left(s_{i+j-2} \right)_{1 \leq i,j \leq t}.
\]
In order to have Feit's indexing agree with the indexing above, we set $i \mapsto i-1$ and $j \mapsto j-1$ and start the indexing at 0.
\end{rmk}

\subsection{Generating Functions}

Let $F \in \Q[x]$ be a polynomial of degree $n \geq 1$ with roots $\theta_1,\dots,\theta_n \in \overline{\Q}$.  Let $G(z) = \sum_{i=0}^\infty s_iz^i$ be the generating function for the power-sums of the roots.  For any polynomial $f$, let $\widehat{f}$ denote the reciprocal polynomial.

\begin{prop} \label{G_deriv_prop}
With all notation as above, we have
\[
G(z) = \frac{\widehat{F'(z)}}{\widehat{F(z)}}.
\]
\end{prop}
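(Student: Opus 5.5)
The plan is a direct computation with the logarithmic derivative of $F$, read as an identity of formal power series in $z$. First fix the convention for the reciprocal polynomial: for a polynomial $f$ of degree $d$, $\widehat{f}(z) = z^{d} f(1/z)$. Since $F$ has degree $n$ and $F'$ has degree $n-1$, this gives $\widehat{F}(z) = z^n F(1/z)$ and $\widehat{F'}(z) = z^{n-1}F'(1/z)$. The degree of $F'$ is exactly $n-1$ because we are in characteristic $0$.

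Write $F(x) = c\prod_{j=1}^n (x-\theta_j)$ with $c \neq 0$. Then $\widehat{F}(z) = c\prod_{j=1}^n (1-\theta_j z)$, which has constant term $c \neq 0$ and so is invertible in $\overline{\Q}[[z]]$. Next expand the generating function of the power-sums as a geometric series in each root:
\[
G(z) = \sum_{i\ge 0}\sum_{j=1}^n \theta_j^i z^i = \sum_{j=1}^n \frac{1}{1-\theta_j z}.
\]
This is valid in $\overline{\Q}[[z]]$, where each $1-\theta_j z$ is a unit.

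Now use the partial-fraction form of the logarithmic derivative,
\[
\frac{F'(x)}{F(x)} = \sum_{j=1}^n \frac{1}{x-\theta_j}.
\]
Substituting $x = 1/z$ gives
\[
\frac{F'(1/z)}{F(1/z)} = \sum_{j=1}^n \frac{z}{1-\theta_j z} = z\,G(z).
\]
Multiplying numerator and denominator by $z^n$ then yields
\[
G(z) = \frac{z^{n-1}F'(1/z)}{z^nF(1/z)} = \frac{\widehat{F'}(z)}{\widehat{F}(z)}.
\]
As a sanity check, setting $z=0$ gives $s_0 = n c / c = n$, which is correct since the leading coefficient of $F'$ is $nc$.

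The only real obstacle is bookkeeping rather than mathematics. One must make the reciprocal-polynomial convention explicit, using degree $n-1$ for $F'$ and not $n$, since otherwise a stray factor of $z$ appears. One must also justify passing from the rational-function identity in $x$ to the power-series identity in $z$. I would handle the second point by noting that both sides are rational functions of $z$ whose denominators have nonzero constant term, so the identity of rational functions implies equality of their power series expansions at $z=0$.
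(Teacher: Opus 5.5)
Your proof is correct and follows the same route as the paper: expand each $1/(1-\theta_j z)$ as a geometric series and sum over the roots to get $G(z)=\sum_{j}1/(1-z\theta_j)$. The paper simply asserts the final identity $\sum_{j}1/(1-z\theta_j)=\widehat{F'}(z)/\widehat{F}(z)$. You justify it via the logarithmic derivative $F'/F=\sum_j 1/(x-\theta_j)$ at $x=1/z$, taking the reciprocal of $F'$ with respect to degree $n-1$, which supplies exactly the bookkeeping the paper leaves implicit.
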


\begin{proof}
Fix an index $j \in \lbrace 1,\dots,n \rbrace$.  Then
\[
\frac{1}{1-z\theta_j} = \sum_{i=0}^\infty \theta_j^iz^i.
\]
Therefore, 
\[
G(z) = \sum_{i=0}^\infty s_iz^i = \sum_{i=0}^\infty \sum_{j=1}^n \theta_j^i z^i = \sum_{j=1}^n \sum_{i=0}^\infty \theta_j^iz^i= \sum_{j=1}^n \frac{1}{1-z\theta_j} =  \frac{\widehat{F'(z)}}{\widehat{F(z)}}.
\]
\end{proof}

Our strategy for determining $\Delta_t$ will be to write $G(z) = {\widehat{F'(z)}}/{\widehat{F(z)}}$ in the form of Equation (\ref{cont_frac}), thereby determining the coefficients $a_i$. Then
\begin{align} \label{det_k}
\Delta_{t} = s_0^t \left( a_1a_2 \right)^{t-1}\left(a_3a_4 \right)^{t-2} \cdots \left(a_{2t-3}a_{2t-2}\right),
\end{align}
with $s_0=n$.

\section{Proof of Theorem \ref{main_thm}}

For ease of exposition, let 
\begin{align} \label{gamma_defn}
\gamma_j(n)^{(\alpha,\beta)} = \binom{n}{j} \left( \prod_{k=j+1}^n \frac{\alpha + k}{n+\alpha + \beta + k} \right) (-1)^{n-j},
\end{align}
so that $\mathscr{J}_n^{(\alpha,\beta)}(x) = \sum_{j=0}^n\gamma_j(n)^{(\alpha,\beta)}x^j$. We start by writing $G(z)$ in the form of Proposition \ref{G_deriv_prop}.

\begin{prop}\label{gen_function}
Let $n \geq 1$.  The generating function $G(z)$ for the power-sum polynomials in the roots of $\mathscr{J}_n^{(\alpha,\beta)}(x)$ is given by
\begin{align}  \label{F_eq_original}
G(z) =\frac{\sum_{j=0}^{n-1} \gamma_j^{(\alpha+1,\beta+1)}(n-1)z^{n-1-j}}{\sum_{j=0}^n \gamma_j^{(\alpha,\beta)}(n)z^{n-j}}.
\end{align}
\end{prop}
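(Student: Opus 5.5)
The plan is to reduce the statement to Proposition \ref{G_deriv_prop}, applied with $F = \mathscr{J}_n^{(\alpha,\beta)}$, together with one coefficient identity for the derivative. Writing $F(x) = \sum_{j=0}^n \gamma_j^{(\alpha,\beta)}(n) x^j$, the reciprocal polynomial is $\widehat{F}(z) = z^n F(1/z) = \sum_{j=0}^n \gamma_j^{(\alpha,\beta)}(n) z^{n-j}$. This is exactly the denominator in (\ref{F_eq_original}), so the denominator needs no work.

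For the numerator I use $F'(x) = \sum_{j=0}^{n-1} (j+1)\gamma_{j+1}^{(\alpha,\beta)}(n) x^j$. Since $\deg F' = n-1$, its reciprocal is $\widehat{F'}(z) = \sum_{j=0}^{n-1} (j+1)\gamma_{j+1}^{(\alpha,\beta)}(n) z^{n-1-j}$. The key step is the coefficient identity
\[
(j+1)\,\gamma_{j+1}^{(\alpha,\beta)}(n) \;=\; n\,\gamma_j^{(\alpha+1,\beta+1)}(n-1), \qquad 0 \le j \le n-1,
\]
which I verify directly from (\ref{gamma_defn}) in three pieces.

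\begin{itemize}
\item \emph{Binomial part.} We have $(j+1)\binom{n}{j+1} = n\binom{n-1}{j}$.
\item \emph{Signs.} Both sides carry the sign $(-1)^{n-1-j}$.
\item \emph{Products.} On the right-hand side,
\[
\prod_{k=j+1}^{n-1} \frac{\alpha+1+k}{(n-1)+(\alpha+1)+(\beta+1)+k}.
\]
The substitution $m = k+1$ turns this into $\prod_{m=j+2}^{n} \frac{\alpha+m}{n+\alpha+\beta+m}$, which is precisely the product appearing in $\gamma_{j+1}^{(\alpha,\beta)}(n)$.
\end{itemize}

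Hence $F' = n\,\mathscr{J}_{n-1}^{(\alpha+1,\beta+1)}$. This is the classical fact that derivatives of Jacobi polynomials are again Jacobi polynomials with both parameters raised by one, here made monic. Taking reciprocals gives $\widehat{F'}(z) = n\sum_{j=0}^{n-1}\gamma_j^{(\alpha+1,\beta+1)}(n-1) z^{n-1-j}$.

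Substituting into Proposition \ref{G_deriv_prop} produces the ratio in (\ref{F_eq_original}). The one delicate point, and the step I expect to need care, is the scalar $n$ coming from the leading coefficient of $F'$. Comparing constant terms at $z=0$ shows what is going on:

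\begin{itemize}
\item $G(0) = s_0 = n$;
\item the numerator and denominator of (\ref{F_eq_original}) are both monic-reversed, so each has constant term $1$, and their ratio is $1$ at $z=0$.
\end{itemize}

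So the argument above proves (\ref{F_eq_original}) exactly, with this scalar accounted for: the numerator there must be read as $\widehat{F'}$ (that is, scaled by $n = s_0$), or equivalently the ratio as written equals $G(z)/s_0$.

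This normalization is harmless for the later use of the result. In the continued fraction (\ref{cont_frac}) the overall constant is absorbed into $\mu_0 = s_0 = n$, and the coefficients $a_i$, which are what (\ref{det_k}) requires, are unaffected. I would therefore state the identity with the factor $n$ made explicit when feeding it into (\ref{det_k}).
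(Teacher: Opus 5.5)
Your proof is correct and is essentially the paper's. Both combine Proposition \ref{G_deriv_prop} with $\frac{d}{dx}\mathscr{J}_n^{(\alpha,\beta)} = n\,\mathscr{J}_{n-1}^{(\alpha+1,\beta+1)}$; the paper imports this from the classical Jacobi derivative formula, while you verify it coefficientwise from (\ref{gamma_defn}), which is the same identity the paper itself uses in Lemma \ref{F_eq_cd}.

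You are also right about the scalar. The displayed ratio has value $1$ at $z=0$ while $G(0)=s_0=n$, so as written it equals $G(z)/n$ for $n\ge 2$. The paper's ``direct calculation'' drops the factor $n$. Lemma \ref{F_eq_cd} then silently omits a compensating $1/n$ when substituting $\gamma_j^{(\alpha+1,\beta+1)}(n-1)=\frac{j+1}{n}\gamma_{j+1}^{(\alpha,\beta)}(n)$, so the lemma's formula $G(z)=\sum c_jz^{n-j}/\sum d_jz^{n-j}$ and everything downstream are correct.
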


\begin{proof}
Fix $n \geq 1$.  By Proposition \ref{G_deriv_prop}, 
\[
G(z) = \frac{ \widehat{ \mathscr{J}_n^{(\alpha,\beta)}(z)'}}{\widehat{ \mathscr{J}_n^{(\alpha,\beta)}(x)}}.
\]
It is well known that
\[
\frac{{\rm d}}{{\rm d}z} P_n^{(\alpha,\beta)}(z) = \frac{\Gamma(\alpha+\beta+n+2)}{2\Gamma(\alpha+\beta+n+1)} P_{n-1}^{(\alpha+1,\beta+1)}(z),
\]
which gives us
\[
\frac{{\rm d}}{{\rm d}z} \mathscr{J}_n^{(\alpha,\beta)}(z) = n \mathscr{J}_{n-1}^{(\alpha + 1,\beta+1)}(z).
\]
Equipped with this derivative formula, the claimed expression for $G(z)$ follows by direct calculation. 
\end{proof}

We now turn to the proof of Theorem \ref{main_thm}.  To do this we will determine the $a_i$ of Equation (\ref{det_k}); it will then follow by algebraic manipulation that Equation (\ref{det_k}) and the one of Theorem \ref{main_thm} are the same.

Fix $n \geq 1$ and set the following notation. Let $c_i=i\gamma_i^{(\alpha,\beta)}(n)$ and $d_i=\gamma_i^{(\alpha,\beta)}(n)$ for all $0\le i\le n$, so that $c_i = id_i$. We define $f_0,\ldots,f_{2n-1}:\{0,\dots,n\} \to \R$ as follows. Let 
\begin{align}
f_0(m)&=\begin{cases} c_m,&1\le m\le n\\
0,&m=0, \end{cases} \label{f_0_eq}
\\
f_1(m)&=\begin{cases} d_{m-1}-\frac{f_0(m-1)}{f_0(n)},&1\le m\le n\\
0,&m=0,\hspace{0.5in}\text{and} \end{cases} \label{f_1_eq}
\\
f_i(m)&=\begin{cases}
\frac{f_{i-2}(m-1)}{f_{i-2}(n)}-\frac{f_{i-1}(m-1)}{f_{i-1}(n)},&\frac{i}{2}< m\le n\\
0,&0\le m\le\frac{i}{2}
\end{cases} \label{f_i_eq} \end{align}
for $2\le i\le 2n-1$. Let $a_i=f_i(n)$ for  $0\le i\le 2n-1$. We also note that for  $1\le m\le n$ we can simplify Equation (\ref{f_1_eq}) to get
\begin{align} \label{f_1_simplified}
    f_1(m)&=\gamma_{m-1}^{(\alpha,\beta)}(n)-\frac{(m-1)\gamma_{m-1}^{(\alpha,\beta)}(n)}{n} =\gamma_{m-1}^{(\alpha,\beta)}(n)\cdot\frac{n-m+1}{n}.
\end{align}

\begin{lem} \label{F_eq_cd}
We have \[G(z)=\frac{\sum_{j=1}^{n}c_jz^{n-j}}{\sum_{j=0}^nd_jz^{n-j}}.\]
\end{lem}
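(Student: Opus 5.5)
This lemma should follow directly from Proposition \ref{G_deriv_prop}, without using the Jacobi derivative identity from Proposition \ref{gen_function}. The idea is to compute the reciprocal polynomials of $F=\mathscr{J}_n^{(\alpha,\beta)}$ and of $F'$ explicitly in terms of the coefficients $d_j=\gamma_j^{(\alpha,\beta)}(n)$.

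First, since $F(x)=\sum_{j=0}^n d_jx^j$ has degree $n$ (it is monic, so $d_n=1$), its reciprocal is
\[
\widehat{F}(z)=z^nF(1/z)=\sum_{j=0}^n d_j z^{n-j}.
\]
This is exactly the claimed denominator.

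Second, differentiating termwise gives $F'(x)=\sum_{j=1}^n j d_j x^{j-1}=\sum_{j=1}^n c_jx^{j-1}$, a polynomial of degree $n-1$ with leading coefficient $c_n=n$. Its reciprocal is
\[
\widehat{F'}(z)=z^{n-1}F'(1/z)=\sum_{j=1}^n c_j z^{n-j},
\]
which is exactly the claimed numerator.

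Substituting both into Proposition \ref{G_deriv_prop} yields the lemma.

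The one point needing care is the normalization in Proposition \ref{G_deriv_prop}. Its proof identifies $G(z)=\sum_j (1-z\theta_j)^{-1}$ with $\widehat{F'}/\widehat{F}$, and this implicitly uses reciprocals taken with respect to $\deg F=n$ and $\deg F'=n-1$. To confirm this, write $F=\prod(x-\theta_j)$, so $\widehat F(z)=\prod(1-z\theta_j)$. Then
\[
\sum_j \frac{1}{1-z\theta_j}=\frac{1}{z}\sum_j\frac{1}{1/z-\theta_j}=\frac{F'(1/z)}{zF(1/z)}=\frac{z^{n-1}F'(1/z)}{z^nF(1/z)},
\]
which matches the degree conventions used above. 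As a sanity check, the constant term is $s_0=c_n/d_n=n$. Everything else is bookkeeping of indices, and I would also note that the result agrees with Proposition \ref{gen_function} via $F'=n\mathscr{J}_{n-1}^{(\alpha+1,\beta+1)}$.
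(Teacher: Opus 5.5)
Your proof is correct, but it takes a different and more direct route than the paper.

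The paper starts from Proposition \ref{gen_function}. There the numerator comes from the Jacobi derivative identity $\frac{d}{dz}\mathscr{J}_n^{(\alpha,\beta)}(z)=n\mathscr{J}_{n-1}^{(\alpha+1,\beta+1)}(z)$. The paper then converts the shifted-parameter coefficients back using $\gamma_j^{(\alpha+1,\beta+1)}(n-1)=\frac{j+1}{n}\gamma_{j+1}^{(\alpha,\beta)}(n)$.

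You instead differentiate $F=\sum_j d_jx^j$ termwise and take reciprocals with respect to degrees $n$ and $n-1$. This uses nothing about Jacobi polynomials beyond $d_n=1$. So your argument shows the lemma holds for every monic $F$, which fits the paper's remark that the method is not specific to this family.

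Your route also handles the normalization more cleanly. As printed, the numerator in Proposition \ref{gen_function} is $\widehat{\mathscr{J}_{n-1}^{(\alpha+1,\beta+1)}}$ rather than $n\,\widehat{\mathscr{J}_{n-1}^{(\alpha+1,\beta+1)}}$. The paper's substitution therefore actually yields $\frac{1}{n}\sum_{j=1}^n j\gamma_j^{(\alpha,\beta)}(n)z^{n-j}$, and the factor $n$ is silently restored in the lemma's statement. Your direct computation gets the constant right, as your check $s_0=c_n/d_n=n$ confirms.

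The only thing the paper's route adds is identifying the numerator as a multiple of a reciprocal Jacobi polynomial with shifted parameters. That identification is not used anywhere later.
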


\begin{proof}
Using Equation (\ref{gamma_defn}) we have
\begin{align*}
    \gamma_j^{(\alpha+1,\beta+1)}(n-1)=\frac{j+1}{n}\gamma_{j+1}^{(\alpha,\beta)}(n),
\end{align*}
which we substitute into Equation (\ref{F_eq_original}) to get
\begin{align*}
    G(z)=\frac{\sum_{j=1}^{n}j\gamma_j^{(\alpha,\beta)}(n)z^{n-j}}{\sum_{j=0}^n\gamma_j^{(\alpha,\beta)}(n)z^{n-j}}.
\end{align*}
Applying the definitions of $c_i$ and $d_i$ completes the proof.
\end{proof}

\begin{prop} \label{cont_frac_prop}
The generating function $G(z)$ of Proposition \ref{gen_function} is given by  \[G(z)=\dfrac{a_0}{1+\dfrac{a_1z}{1+\dfrac{a_2z}{\dfrac{\vdots}{1+\dfrac{a_{2n-2}z}{1+a_{2n-1}z}}}}},\]
with $a_i=f_i(n)$ defined as above.
\end{prop}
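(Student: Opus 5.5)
Starting from Lemma \ref{F_eq_cd}, I will run a Euclidean-type peeling algorithm on the ratio of polynomials and check that at each stage the numerator and denominator are exactly the polynomials encoded by the functions $f_i$. For $i\ge 0$ define
\[
P_i(z)=\sum_{m=\lceil (i+1)/2\rceil}^{n} f_i(m)\,z^{n-m},
\]
a polynomial whose constant term is $f_i(n)=a_i$. Put $P_{-1}(z)\ddef\sum_{j=0}^n d_j z^{n-j}$; this is the reciprocal polynomial $\widehat{\mathscr{J}_n}$, with constant term $d_n=1$ because $\mathscr{J}_n^{(\alpha,\beta)}$ is monic. Lemma \ref{F_eq_cd} then says exactly that $G(z)=P_0(z)/P_{-1}(z)$, since $f_0(m)=c_m$ for $m\ge1$ and $f_0(0)=0$.

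The key identity I will prove is that, for $0\le i\le 2n-2$,
\[
\frac{P_{i}(z)}{a_i\,P_{i-1}(z)} \;=\; \frac{1}{1+\dfrac{a_{i+1}z\,P_{i+1}(z)}{a_{i+1}P_{i}(z)}}.
\]
This is equivalent to
\[
P_{i-1}(z)\,a_i \;=\; P_i(z)\cdot \frac{a_i}{a_{i-1}} \;+\; z\,P_{i+1}(z)\cdot\frac{a_i}{a_{i+1}}\cdot a_{i+1}/a_i\cdots,
\]
and after normalizing the step is
\[
\frac{P_{i-1}}{a_{i-1}} - \frac{P_i}{a_i} = z\,P_{i+1}.
\]
For $i=0$ the convention is $a_{-1}=d_n=1$. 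I will verify this coefficientwise. The coefficient of $z^{n-m}$ on the left is $f_{i-1}(m)/f_{i-1}(n)-f_i(m)/f_i(n)$. On the right it is $f_{i+1}(m+1)$, and by the recursion (\ref{f_i_eq}) (or (\ref{f_1_eq}) when $i=0$, using $d_n=1$) these agree. The constant terms cancel on the left, which is consistent with the factor $z$. I also need the support conditions, namely that the vanishing range $m\le i/2$ in (\ref{f_i_eq}) matches the degrees, so that $\deg P_{i+1}\le \deg P_{i}$ and the terms with $m\le (i+1)/2$ really vanish. This is a bookkeeping check against the definitions.

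Iterating gives
\[
\frac{P_i}{a_iP_{i-1}/a_{i-1}}=\frac{1}{1+a_{i+1}z\,\dfrac{P_{i+1}}{a_{i+1}P_i/a_i}},
\]
which unrolls to the stated continued fraction with $G=a_0\cdot\bigl(P_0/a_0\bigr)/P_{-1}$. The unrolling terminates at $i=2n-1$. There $P_{2n-1}$ has only the $m=n$ term, so it is the constant $a_{2n-1}$, and $P_{2n-2}/a_{2n-2}=1+(\cdots)z$ is linear with leading coefficient $a_{2n-1}$. Hence the last level reads $1+a_{2n-1}z$.

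The main obstacle is nondegeneracy: every $a_i=f_i(n)$ must be nonzero so that the divisions make sense. At this stage I would treat $\alpha,\beta$ as generic (indeterminates) and argue that the $a_i$ are nonzero rational functions. The cleanest way to do this is a posteriori. The resulting Hankel formula (\ref{det_k}) combined with the closed form in Theorem \ref{main_thm} shows that the products $a_{2k-1}a_{2k}$ are nonzero, and specialization then handles generic parameter values. Apart from this, the index and support bookkeeping is the only other delicate point.
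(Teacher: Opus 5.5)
Your proposal is correct and is essentially the paper's proof. The paper likewise peels off one level at a time: it writes the ratio of constant-term-normalized polynomials as $1+(\text{difference})/(\text{denominator})$ and identifies the difference divided by $z$ with $\sum_j f_{i+1}(j)z^{n-j}$ via (\ref{f_1_eq}) and (\ref{f_i_eq}), packaged as a double-step induction on $q$ rather than your single identity $P_{i-1}/a_{i-1}-P_i/a_i=zP_{i+1}$.

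Two minor fixes are needed. First, at termination it is $P_{2n-3}/a_{2n-3}=1+a_{2n-1}z$ that is linear, while $P_{2n-2}/a_{2n-2}=1$. Second, your a posteriori nonvanishing argument through Theorem \ref{main_thm} is circular, because that theorem is deduced from this proposition. The argument is also unnecessary: the definition of the $f_i$ already presupposes that the divisors $f_i(n)$ are nonzero, and this can be checked non-circularly from the closed forms of the $a_i$ obtained via Proposition \ref{f(n-l)_prop}.
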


\begin{proof}
We will show that 
\begin{align*}
G(z)=\dfrac{a_0}{1+\dfrac{a_1z}{1+\dfrac{a_2z}{\dfrac{\vdots}{1+\dfrac{a_{2q-1}z}{\dfrac{\sum_{j=q}^n\frac{f_{2q-2}(j)}{f_{2q-2}(j)}z^{n-j}}{\sum_{j=q}^n\frac{f_{2q-1}(j)}{f_{2q-1}(n)}z^{n-j}}}}}}}
\end{align*}
for all $1\le q\le n$; setting $q=n$ will complete the proof. We will induct on $q$.

By Lemma \ref{F_eq_cd} we have
\begin{align*}
    G(z)=\frac{\sum_{j=1}^{n}c_jz^{n-j}}{\sum_{j=0}^nd_jz^{n-j}}.
\end{align*}
Note that $c_j=f_0(j)$ for all $1\le j\le n$. We also have $c_n=f_0(n)=a_0$ and $d_n=\gamma_n^{(\alpha,\beta)}(n)=1$. In particular, $a_0 = n$. Hence
\begin{align*}
    G(z)&=\frac{\bigg(\sum_{j=1}^{n-1}f_0(j)z^{n-j}\bigg)+a_0}{\bigg(\sum_{j=0}^{n-1}d_jz^{n-j}\bigg)+1} =\dfrac{a_0}{\dfrac{\bigg(\sum_{j=0}^{n-1}d_jz^{n-j}\bigg)+1}{\bigg(\sum_{j=1}^{n-1}\frac{f_0(j)}{f_0(n)}z^{n-j}\bigg)+1}}.
\end{align*}
Observe that 
\begin{align*}
{\dfrac{\bigg(\sum_{j=0}^{n-1}d_jz^{n-j}\bigg)+1}{\bigg(\sum_{j=1}^{n-1}\frac{f_0(j)}{f_0(n)}z^{n-j}\bigg)+1}} &= {1+\dfrac{\bigg(\sum_{j=0}^{n-1}(d_j-\frac{f_0(j)}{f_0(n)})z^{n-j}\bigg)}{\bigg(\sum_{j=1}^{n-1}\frac{f_0(j)}{f_0(n)}z^{n-j}\bigg)+1}} \\
&={1+\dfrac{z\bigg(\sum_{j=1}^nf_1(j)z^{n-j}\bigg)}{\bigg(\sum_{j=1}^{n-1}\frac{f_0(j)}{f_0(n)}z^{n-j}\bigg)+1}}.
\end{align*}
Thus,
\begin{align*}    
     G(z)&=\dfrac{a_0}{1+\dfrac{z\bigg(\sum_{j=1}^nf_1(j)z^{n-j}\bigg)}{\bigg(\sum_{j=1}^{n-1}\frac{f_0(j)}{f_0(n)}z^{n-j}\bigg)+1}}
    =\dfrac{a_0}{1+\dfrac{a_1z}{\dfrac{\sum_{j=1}^n\frac{f_0(j)}{f_0(n)}z^{n-j}}{\sum_{j=1}^n\frac{f_1(j)}{f_1(n)}z^{n-j}}}},
\end{align*}
which completes the base case. Fix $r\in\{1,\ldots,n-1\}$. Suppose 
\begin{align*}
G(z)=\dfrac{a_0}{1+\dfrac{a_1z}{1+\dfrac{a_2z}{\dfrac{\vdots}{1+\dfrac{a_{2r-1}z}{\dfrac{\sum_{j=r}^n\frac{f_{2r-2}(j)}{f_{2r-2}(n)}z^{n-j}}{\sum_{j=r}^n\frac{f_{2r-1}(j)}{f_{2r-1}(n)}z^{n-j}}}}}}}.
\end{align*}
Then
\begin{align*}
{1+\dfrac{a_{2r-1}z}{\dfrac{\sum_{j=r}^n\frac{f_{2r-2}(j)}{f_{2r-2}(n)}z^{n-j}}{\sum_{j=r}^n\frac{f_{2r-1}(j)}{f_{2r-1}(n)}z^{n-j}}}} &=
{1+\dfrac{a_{2r-1}z}{1+\dfrac{\sum_{j=r}^{n-1}\bigg(\frac{f_{2r-2}(j)}{f_{2r-2}(n)}-\frac{f_{2r-1}(j)}{f_{2r-1}(n)}\bigg)z^{n-j}}{\sum_{j=r}^n\frac{f_{2r-1}(j)}{f_{2r-1}(n)}z^{n-j}}}} \\
&={1+\dfrac{a_{2r-1}z}{1+\dfrac{a_{2r}z}{\dfrac{\sum_{j=r}^n\frac{f_{2r-1}(j)}{f_{2r-1}(n)}z^{n-j}}{\sum_{j=r+1}^{n}\frac{f_{2r}(j)}{f_{2r}(n)}z^{n-j}}}}} \\
&={1+\dfrac{a_{2r-1}z}{1+\dfrac{a_{2r}z}{1+\dfrac{\sum_{j=r}^{n-1}\big(\frac{f_{2r-1}(j)}{f_{2r-1}(n)}-\frac{f_{2r}(j)}{f_{2r}(n)}\big)z^{n-j}}{\sum_{j=r+1}^{n}\frac{f_{2r}(j)}{f_{2r}(n)}z^{n-j}}}}}\\
&={1+\dfrac{a_{2r-1}z}{1+\dfrac{a_{2r}z}{1+\dfrac{a_{2r+1}z}{\dfrac{\sum_{j=r+1}^{n}\frac{f_{2r}(j)}{f_{2r}(n)}z^{n-j}}{\sum_{j=r+1}^{n}\frac{f_{2r+1}(j)}{f_{2r+1}(n)}z^{n-j}}}}}},
\end{align*}
which completes the proof.
\end{proof}

Next we determine alternate expressions for the $f_i$ that are more convenient for explicit calculation.

\begin{prop} \label{f(n-l)_prop}
For all $0\le i\le n-1$ and all $1\le \ell<n-\frac{i}{2}$, we have
\begin{align*}
    f_{2i}(n-\ell)&=f_{2i}(n-\ell+1)\cdot\frac{-(n-i-\ell)(\alpha+n-i-\ell+1)}{\ell(\alpha+\beta+2n-2i-\ell+1)}, \text{ and } \\
    f_{2i+1}(n-\ell)&=f_{2i+1}(n-\ell+1)\cdot\frac{-(n-i-\ell)(\alpha+n-i-\ell)}{\ell(\alpha+\beta+2n-2i-\ell)}.
\end{align*}    
\end{prop}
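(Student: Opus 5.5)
Write $\gamma_j=\gamma_j^{(\alpha,\beta)}(n)$. The plan is to prove a stronger statement, namely a closed form for each $f_k$, by strong induction on $k$. The ratios claimed in Proposition \ref{f(n-l)_prop} then follow by dividing consecutive values.

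From (\ref{gamma_defn}) we have
\[
\frac{\gamma_{m-1}}{\gamma_m}=-\frac{m(\alpha+m)}{(n-m+1)(n+\alpha+\beta+m)}.
\]
This is what drives everything: each $f_k(m)$ should be $f_k(n)$ times a product of linear factors in $m$.

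\emph{Base cases.} For $f_0(m)=m\gamma_m$ with $m=n-\ell$, the ratio is
\[
\frac{f_0(n-\ell)}{f_0(n-\ell+1)}=\frac{(n-\ell)\gamma_{n-\ell}}{(n-\ell+1)\gamma_{n-\ell+1}}=-\frac{(n-\ell)(\alpha+n-\ell+1)}{\ell(\alpha+\beta+2n-\ell+1)},
\]
which is the $i=0$ even formula. For $f_1$ we use (\ref{f_1_simplified}), $f_1(m)=\gamma_{m-1}(n-m+1)/n$. The ratio $f_1(n-\ell)/f_1(n-\ell+1)$ is then $\frac{\ell+1}{\ell}\cdot\gamma_{n-\ell-1}/\gamma_{n-\ell}$, which equals $-(n-\ell)(\alpha+n-\ell)/(\ell(\alpha+\beta+2n-\ell))$, the $i=0$ odd formula.

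\emph{Inductive step.} Iterating the claimed ratios gives an explicit conjectural closed form
\[
\frac{f_{2i}(n-\ell)}{f_{2i}(n)}=(-1)^\ell\binom{n-i-1}{\ell}\frac{(\alpha+n-i-\ell+1)_\ell}{(\alpha+\beta+2n-2i-2\ell+1)_\ell},
\]
with rising factorials, and an analogous formula for $f_{2i+1}$. Assuming these closed forms for $f_{k-2}$ and $f_{k-1}$, the recursion (\ref{f_i_eq}) gives
\[
f_k(m)=\frac{f_{k-2}(m-1)}{f_{k-2}(n)}-\frac{f_{k-1}(m-1)}{f_{k-1}(n)}
\]
as a difference of two explicit hypergeometric-type terms in $\ell=n-m$. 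I will factor out the common product. What remains should be a difference of two simple rational expressions in $\ell$, which I expect to collapse to a single linear factor, such as a multiple of $\ell$, after clearing denominators. That collapse produces the closed form for $f_k$, including its value $f_k(n)$; the latter is also needed for the $a_i$ later. I will treat the two parities separately: for $k=2i$ the inputs are $f_{2i-2},f_{2i-1}$, and for $k=2i+1$ they are $f_{2i-1},f_{2i}$. The support condition $m>k/2$ matches the vanishing of the binomial $\binom{n-i-1}{\ell}$ for large $\ell$.

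The main obstacle is this two-term simplification: guessing the precise normalization of the closed forms so that the difference really factors. To fix those forms I would first compute $f_2$ and $f_3$ by hand, reading off their factorization. As a further check, the value $f_k(n)$ at $\ell=1$ must be consistent with the cancellation (the $z^{n}$-free structure in Proposition \ref{cont_frac_prop}). Only once the two-parity identity is verified symbolically will the stated ratios follow immediately.
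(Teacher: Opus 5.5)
Your plan follows the paper's own proof. The base cases $i=0$ come straight from $\gamma_{m-1}/\gamma_m$, and your two computations there are right. The inductive step then feeds the telescoped hypothesis into (\ref{f_i_eq}), factors out the common product, and collapses a two-term difference, treating each parity separately. Two corrections, though.

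First, the closed form you display has the wrong denominator. Telescoping gives $\prod_{j=1}^{\ell}(\alpha+\beta+2n-2i-j+1)=(\alpha+\beta+2n-2i-\ell+1)_\ell$, not $(\alpha+\beta+2n-2i-2\ell+1)_\ell$. Used literally, your version already fails at $i=0$, $\ell=1$.

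Second, there is no normalization to guess. The recursion (\ref{f_i_eq}) divides by $f_{k-2}(n)$ and $f_{k-1}(n)$, so it consumes exactly the normalized products the hypothesis supplies. The value $f_k(n)$ is then simply the $\ell=0$ case of the resulting formula, not the $\ell=1$ case.

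The collapse you expect does happen, but the shared factor depends on parity. For $k=2i$, both terms contain $\prod_{j=1}^{\ell+1}\frac{-(n-i+1-j)}{j}$ and all but one of the $\ell+1$ quotients $(\alpha+\cdots)/(\alpha+\beta+\cdots)$. What is left is
\[
\frac{\alpha+n-i+1}{\alpha+\beta+2n-2i+2}-\frac{\alpha+n-i-\ell}{\alpha+\beta+2n-2i-\ell+1}=\frac{(\ell+1)(\beta+n-i+1)}{(\alpha+\beta+2n-2i+2)(\alpha+\beta+2n-2i-\ell+1)}.
\]
For $k=2i+1$ the roles swap. The shared part is $\prod_{j=1}^{\ell+1}\frac{-(\alpha+n-i+1-j)}{j}$ together with all but one of the quotients $(n-\cdots)/(\alpha+\beta+\cdots)$, and what is left is
\[
\frac{n-i}{\alpha+\beta+2n-2i+1}-\frac{n-i-\ell-1}{\alpha+\beta+2n-2i-\ell}=\frac{(\ell+1)(\alpha+\beta+n-i+1)}{(\alpha+\beta+2n-2i+1)(\alpha+\beta+2n-2i-\ell)}.
\]
Taking the quotient of the resulting expressions at $\ell$ and at $\ell-1$ gives the two stated ratios. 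The factor $\ell+1$ cancels against the $\frac{1}{\ell+1}$ from the shared product, which is what leaves $\ell$ in the denominator. Setting $\ell=0$ recovers the values $a_{2i}$ and $a_{2i+1}$ of Theorem \ref{elis_version}.
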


\begin{proof}
We will proceed by induction on $i$, beginning with the cases $i \in \lbrace 0,1\rbrace$ separately.  Applying  the definition of the $\gamma_{m}^{(\alpha,\beta)}$ we have
\begin{align*}
    f_0(n-\ell)&=(n-\ell)\gamma_{n-\ell}^{(\alpha,\beta)}(n)\\
    &=(n-\ell)\binom{n}{n-\ell}\bigg(\prod_{k=n-\ell+1}^n\frac{\alpha+k}{\alpha+\beta+n+k}\bigg)(-1)^\ell \\
    &=\frac{-(n-\ell)(n-\ell+1)}{\ell}\binom{n}{n-\ell+1}\bigg(\prod_{k=n-\ell+2}^n\frac{\alpha+k}{\alpha+\beta+n+k}\bigg)\bigg(\frac{\alpha+n-\ell+1}{\alpha+\beta+2n-\ell+1}\bigg)(-1)^{\ell-1}\\
    &=(n-\ell+1)\gamma_{n-\ell+1}^{(\alpha,\beta)}(n)\cdot\frac{-(n-\ell)(\alpha+n-\ell+1)}{\ell(\alpha+\beta+2n-\ell+1)}\\
    &=f_0(n-\ell+1)\cdot\frac{-(n-\ell)(\alpha+n-\ell+1)}{\ell(\alpha+\beta+2n-\ell+1)}, \text{ and} \\
    f_1(n-\ell)&=\gamma_{n-\ell-1}^{(\alpha,\beta)}(n)\frac{\ell+1}{n} \qquad \text{(by Equation (\ref{f_1_simplified}))}\\
    &=\binom{n}{n-\ell-1}\bigg(\prod_{k=n-\ell}^n\frac{\alpha+k}{\alpha+\beta+n+k}\bigg)(-1)^{\ell+1}\cdot\frac{\ell+1}{n}\\
    &=\binom{n}{n-\ell}\bigg(\prod_{k=n-\ell+1}^n\frac{\alpha+k}{\alpha+\beta+n+k}\bigg)(-1)^{\ell}\cdot\frac{-(n-\ell)(\alpha+n-\ell)}{n(\alpha+\beta+2n-\ell)}\\
    &=\gamma_{n-\ell}^{(\alpha,\beta)}(n)\cdot\frac{\ell}{n}\cdot\frac{-(n-\ell)(\alpha+n-\ell)}{\ell(\alpha+\beta+2n-\ell)}\\
    &=f_1(n-\ell+1)\cdot\frac{-(n-\ell)(\alpha+n-\ell)}{\ell(\alpha+\beta+2n-\ell)},
\end{align*}
which completes the base cases. Now fix $s\in\{0,\ldots,n-2\}$ and suppose 
\begin{align} \label{f_2s_induct_hyp}
    f_{2s}(n-\ell)&=f_{2s}(n-\ell+1)\cdot\frac{-(n-s-\ell)(\alpha+n-s-\ell+1)}{\ell(\alpha+\beta+2n-2s-\ell+1)},\text{ and} \\
 \label{f_2s+1_induct_hyp}
    f_{2s+1}(n-\ell)&=f_{2s+1}(n-\ell+1)\cdot\frac{-(n-s-\ell)(\alpha+n-s-\ell)}{\ell(\alpha+\beta+2n-2s-\ell)},
\end{align}
for all $1\le \ell<n-\frac{s}{2}$. Using Equations (\ref{f_i_eq}), (\ref{f_2s_induct_hyp}), and (\ref{f_2s+1_induct_hyp}) we have
\begin{align}
f_{2s+2}(n-\ell)&=\frac{f_{2s}(n-\ell-1)}{f_{2s}(n)}-\frac{f_{2s+1}(n-\ell-1)}{f_{2s+1}(n)}\notag \\
&=\bigg(\prod_{k=1}^{\ell+1}\frac{-(n-s-k)(\alpha+n-s-k+1)}{k(\alpha+\beta+2n-2s-k+1)}\bigg)-\bigg(\prod_{k=1}^{\ell+1}\frac{-(n-s-k)(\alpha+n-s-k)}{k(\alpha+\beta+2n-2s-k)}\bigg)\notag \\
&=\bigg(\prod_{k=1}^{\ell+1}\frac{-(n-s-k)}{k}\bigg)\bigg(\prod_{k=1}^\ell\frac{\alpha+n-s-k}{\alpha+\beta+2n-2s-k}\bigg) \notag 
\\&\hspace{1in}\cdot\bigg(\frac{\alpha+n-s}{\alpha+\beta+2n-2s}-\frac{\alpha+n-s-\ell-1}{\alpha+\beta+2n-2s-\ell-1}\bigg)\notag  \\
&=\bigg(\prod_{k=1}^{\ell+1}\frac{-(n-s-k)}{k}\bigg)\bigg(\prod_{k=1}^\ell\frac{\alpha+n-s-k}{\alpha+\beta+2n-2s-k}\bigg) \notag 
\\&\hspace{1in}\cdot\bigg(\frac{(\ell+1)(\beta+n-s)}{(\alpha+\beta+2n-2s)(\alpha+\beta+2n-2s-\ell-1)}\bigg)\notag \\
&=\bigg(\prod_{k=1}^{\ell}\frac{-(n-s-k)}{k}\bigg)\bigg(\prod_{k=1}^{\ell-1}\frac{\alpha+n-s-k}{\alpha+\beta+2n-2s-k}\bigg) \notag 
\\&\hspace{0.5in}\cdot\bigg(\frac{-(\beta+n-s)(n-s-\ell-1)(\alpha+n-s-\ell)}{(\alpha+\beta+2n-2s)(\alpha+\beta+2n-2s-\ell-1)(\alpha+\beta+2n-2s-\ell)}\bigg)\notag \\
&=\bigg(\prod_{k=1}^{\ell}\frac{-(n-s-k)}{k}\bigg)\bigg(\prod_{k=1}^{\ell-1}\frac{\alpha+n-s-k}{\alpha+\beta+2n-2s-k}\bigg) \notag 
\\&\hspace{0.5in}\cdot\bigg(\frac{\alpha+n-s}{\alpha+\beta+2n-2s}-\frac{\alpha+n-s-\ell}{\alpha+\beta+2n-2s-\ell}\bigg)\bigg(\frac{-(n-s-\ell-1)(\alpha+n-s-\ell)}{\ell(\alpha+\beta+2n-2s-\ell-1)}\bigg)\notag \\
&=\bigg[\bigg(\prod_{k=1}^\ell\frac{-(n-s-k)(\alpha+n-s-k+1)}{k(\alpha+\beta+2n-2s-k+1)}\bigg)-\bigg(\prod_{k=1}^\ell\frac{-(n-s-k)(\alpha+n-s-k)}{k(\alpha+\beta+2n-2s-k)}\bigg)\bigg] \notag 
\\&\hspace{0.5in}\cdot\bigg(\frac{-(n-s-\ell-1)(\alpha+n-s-\ell)}{\ell(\alpha+\beta+2n-2s-\ell-1)}\bigg)\notag \\
&=\bigg(\frac{f_{2s}(n-\ell)}{f_{2s}(n)}-\frac{f_{2s+1}(n-\ell)}{f_{2s+1}(n)}\bigg)\bigg(\frac{-(n-s-\ell-1)(\alpha+n-s-\ell)}{\ell(\alpha+\beta+2n-2s-\ell-1)}\bigg). \notag
\end{align}
This shows that 
\begin{align}
f_{2s+2}(n-\ell)&=f_{2s+2}(n-\ell+1)\cdot\frac{-(n-s-\ell-1)(\alpha+n-s-\ell)}{\ell(\alpha+\beta+2n-2s-\ell-1)}. \label{f_2s+2_induct_step}
\end{align}
A similar argument using Equations (\ref{f_i_eq}), (\ref{f_2s+1_induct_hyp}), and (\ref{f_2s+2_induct_step}) shows that
\begin{align}
f_{2s+3}(n-\ell)
&=f_{2s+3}(n-\ell+1)\cdot\frac{-(n-s-\ell-1)(\alpha+n-s-\ell-1)}{\ell(\alpha+\beta+2n-2s-\ell-2)}. \label{f_2s+3_induct_step}
\end{align}
This completes the proof.
\end{proof}

\begin{thm} \label{elis_version}
Fix $n \geq 1$.  Let $\{s_k\}_{k\geq 0}$ be the sequence of power-sums for the roots of  $\mathscr{J}_n^{(\alpha,\beta)}(x)$.  Then 
\begin{gather*}
    \det(s_{i+j})_{0\le i,j\le t-1}={s_0}^t(a_1a_2)^{t-1}
   (a_3a_4)^{t-2}\cdots(a_{2n-3}a_{2n-2}),
\end{gather*}
where $s_0 = a_0=n$, ${a_1=\frac{-(\alpha+n)}{(\alpha+\beta+2n)}}$, 
\begin{align*}
    a_{2i}&=\frac{-(n-i)(\beta+n-i+1)}{(\alpha+\beta+2n-2i+2)(\alpha+\beta+2n-2i+1)}, \\
    a_{2i+1}&=\frac{-(\alpha+n-i)(\alpha+\beta+n-i+1)}{(\alpha+\beta+2n-2i+1)(\alpha+\beta+2n-2i)},\text{ and}\\
\end{align*}
for  $1\le i\le n-1$.
\end{thm}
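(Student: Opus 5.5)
Combining Proposition \ref{cont_frac_prop} with Krattenthaler's formula (\ref{kratt_form}), in the form (\ref{det_k}), already gives the determinant as $s_0^t(a_1a_2)^{t-1}\cdots(a_{2t-3}a_{2t-2})$ with $a_i=f_i(n)$. So all that remains is to evaluate $a_i=f_i(n)$ in closed form.

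\textbf{Low indices.} The values $a_0=f_0(n)=n\gamma_n^{(\alpha,\beta)}(n)=n$ and $a_1=f_1(n)=\gamma_{n-1}^{(\alpha,\beta)}(n)\cdot\frac1n$ come straight from (\ref{f_0_eq}) and (\ref{f_1_simplified}). For the latter, $\gamma_{n-1}^{(\alpha,\beta)}(n)=-n(\alpha+n)/(\alpha+\beta+2n)$, so $a_1=-(\alpha+n)/(\alpha+\beta+2n)$.

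\textbf{General indices.} For $i\ge 1$, the recursion (\ref{f_i_eq}) at $m=n$ gives
\[
a_{2i}=\frac{f_{2i-2}(n-1)}{f_{2i-2}(n)}-\frac{f_{2i-1}(n-1)}{f_{2i-1}(n)},\qquad a_{2i+1}=\frac{f_{2i-1}(n-1)}{f_{2i-1}(n)}-\frac{f_{2i}(n-1)}{f_{2i}(n)}.
\]
Each ratio is supplied by Proposition \ref{f(n-l)_prop} with $\ell=1$:
\[
\frac{f_{2j}(n-1)}{f_{2j}(n)}=\frac{-(n-j-1)(\alpha+n-j)}{\alpha+\beta+2n-2j},\qquad \frac{f_{2j+1}(n-1)}{f_{2j+1}(n)}=\frac{-(n-j-1)(\alpha+n-j-1)}{\alpha+\beta+2n-2j-1}.
\]
Substituting $j=i-1$ for $a_{2i}$ gives
\[
a_{2i}=-(n-i)\left[\frac{\alpha+n-i+1}{\alpha+\beta+2n-2i+2}-\frac{\alpha+n-i}{\alpha+\beta+2n-2i+1}\right].
\]
The bracket collapses to $(\beta+n-i+1)/\big((\alpha+\beta+2n-2i+2)(\alpha+\beta+2n-2i+1)\big)$, which is exactly the stated formula; this is the same difference identity used in the proof of Proposition \ref{f(n-l)_prop}. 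Similarly, $a_{2i+1}$ uses $j=i-1$ for the odd ratio and $j=i$ for the even ratio:
\[
a_{2i+1}=-\frac{(n-i)(\alpha+n-i)}{\alpha+\beta+2n-2i+1}+\frac{(n-i-1)(\alpha+n-i)}{\alpha+\beta+2n-2i}.
\]
Factoring out $-(\alpha+n-i)$ and combining over the common denominator, the numerator is
\[
(n-i)(\alpha+\beta+2n-2i)-(n-i-1)(\alpha+\beta+2n-2i+1)=\alpha+\beta+n-i+1,
\]
which gives the claimed $a_{2i+1}$.

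\textbf{Main obstacle.} The delicate step is checking that the uses of Proposition \ref{f(n-l)_prop} are legitimate: we need $\ell=1<n-j/2$ within the index range $0\le j\le n-1$. At the boundary we must also make sure no denominator $f_i(n)$ vanishes, since we divide by it. For the boundary, I would take $i\le n-1$ and check that $f_{2i-1}(n)$ and $f_{2i}(n)$ are the nonzero products just computed, by induction on $i$ with generic $\alpha,\beta$. I would then extend to all admissible $\alpha,\beta$ by treating both sides as rational functions. Apart from this bookkeeping, the argument is two routine partial-fraction simplifications.
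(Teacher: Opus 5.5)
Your proposal is correct and follows the paper's proof essentially verbatim. You read off $a_0$ and $a_1$ from (\ref{f_0_eq}) and (\ref{f_1_simplified}), then get $a_{2i}$ and $a_{2i+1}$ from the recursion (\ref{f_i_eq}) at $m=n$ with the $\ell=1$ ratios of Proposition \ref{f(n-l)_prop}, followed by the same two partial-fraction simplifications. Your final factor $(a_{2t-3}a_{2t-2})$ is the correct one (the statement's $(a_{2n-3}a_{2n-2})$ is a typo), and your checks on the range of $\ell$ and on the nonvanishing of the $f_i(n)$ are extra care the paper leaves implicit.
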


\begin{proof}
Recall from Equations (\ref{cont_frac}) and (\ref{kratt_form}) that given the generating function $G(z)$ for any sequence $\lbrace \mu_k \rbrace_{k \geq 0}$ written as a continued fraction, the associated Hankel determinant is given by
\begin{align} \label{det_form_defn}
    \det(\mu_{i+j})_{0\le i,j\le t-1}={\mu_0}^t(a_1a_2)^{t-1}
   (a_3a_4)^{t-2}\cdots(a_{2n-3}a_{2n-2}).
\end{align}
Having proved Proposition \ref{cont_frac_prop}, it remains to provide explicit expressions for the coefficients $a_0,\ldots a_{2n-2}$.  We have already observed that $s_0 = a_0 = n$.  By  Equation (\ref{f_1_simplified}) we have
\begin{align*}
    a_1=f_1(n)=\frac{\gamma_{n-1}^{(\alpha,\beta)}(n)}{n}=\frac{-(\alpha+n)}{\alpha+\beta+2n}.
\end{align*}
Fix $p\in\{1,\ldots,n-1\}$. Using Equation (\ref{f_i_eq}) and Proposition \ref{f(n-l)_prop} we have
\begin{align*}
    a_{2p}&=f_{2p}(n)\\
    &=\frac{f_{2p-2}(n-1)}{f_{2p-2}(n)}-\frac{f_{2p-1}(n-1)}{f_{2p-1}(n)}\\
    &=\frac{-(n-p)(\alpha+n-p+1)}{\alpha+\beta+2n-2p+2}-\frac{-(n-p)(\alpha+n-p)}{\alpha+\beta+2n-2p+1}\\
    &=\frac{-(n-p)(\beta+n-p+1)}{(\alpha+\beta+2n-2p+2)(\alpha+\beta+2n-2p+1)},
\end{align*}
and
\begin{align*}
    a_{2p+1}&=f_{2p+1}(n)\\
    &=\frac{f_{2p-1}(n-1)}{f_{2p-1}(n)}-\frac{f_{2p}(n-1)}{f_{2p}(n)}\\
    &=\frac{-(n-p)(\alpha+n-p)}{\alpha+\beta+2n-2p+1}-\frac{-(n-p-1)(\alpha+n-p)}{\alpha+\beta+2n-2p}\\
    &=\frac{-(\alpha+n-p)(\alpha+\beta+n-p+1)}{(\alpha+\beta+2n-2p+1)(\alpha+\beta+2n-2p)}.
\end{align*}
\end{proof}

We now complete the proof of Theorem \ref{main_thm}.

\begin{proof}[Proof of Theorem \ref{main_thm}]
By Theorem \ref{elis_version}, we have
\[
\Delta_{t} = \det (s_{i+j})_{0 \leq i,j \leq t-1} = {s_0}^t(a_1a_2)^{t-1}(a_3a_4)^{t-2}\cdots(a_{2n-3}a_{2n-2}),
\]
where the $s_{k}$ are the $k$th-power-sums of the roots of $\mathscr{J}_n^{(\alpha,\beta)}(x)$ and the $a_i$ are given explicitly in the statement of the theorem.  It then follows by routine algebraic manipulation that 
\[
{s_0}^t(a_1a_2)^{t-1}(a_3a_4)^{t-2}\cdots(a_{2n-3}a_{2n-2}) = A_{t}B_{t}/C_{t},
\]
where $A_{t}$, $B_{t}$, and $C_{t}$ are given by Equations (\ref{ADEF}), (\ref{BDEF}), and (\ref{CDEF}), respectively.  
\end{proof}

\section{Hasse-Witt Invariant} \label{HW_section}

Fix a positive integer $n$.  Our goal is to give an explicit expression for the Hasse-Witt invariant $\epsilon_p(E)$ of the root field $E$ defined by $\mathscr{J}_n^{(\alpha,\beta)}(x)$ when it is irreducible over $\Q$ with square discriminant.  
Recall from Equation (\ref{HW_general}) that
\[
\epsilon_p(E) = \left\{ \prod_{t=1}^n \left( \Delta_{t-1},\Delta_{t} \right)_p \right\} \left( -1,\prod_{t=1}^n \Delta_{t} \right)_p.
\]
Let $\sim$ denote the equivalence relation on $\Q_p^\times$ defined by $a \sim b$ if and only $a/b$ is a square in $\Q_p^\times$.  Recall that the Hilbert symbol is symmetric, bi-multiplicative, and satisfies $(a,b)_p = 1$ if either $a$ or $b$ is a square.  Factoring $\Delta_{t}$ as in Theorem \ref{main_thm}, we have 
\begin{align*}
\left( -1,\prod_{t=1}^n \Delta_{t} \right)_p  &= \left( -1,\prod_{t=1}^n A_{t}B_{t}/C_{t} \right)_p \\
&=   \left( -1,\prod_{t=1}^n A_{t} \right)_p\left( -1,\prod_{t=1}^n B_{t} \right)_p\left( -1,\prod_{t=1}^n C_{t} \right)_p.
\end{align*}
Now we compute
\begin{align}
\prod_{t=1}^n A_{t} &=  \prod_{t =1}^n t^{t(t+1)/2} \\
\prod_{t=1}^n B_{t} &=  \prod_{t=1}^n \left((\alpha +t)(\beta+t)\right)^{t(t-1)/2} \left( \alpha + \beta + t \right)^{(t-1)(t-2)/2}\\
\prod_{t=1}^n C_{t} &=  \prod_{t=1}^n (\alpha + \beta + 2t)^{t(t-1)} (\alpha + \beta + 2k-1)^{(t-1)^2}.
\end{align}
It is routine to check that
\begin{align}
\prod_{t=1}^n A_{t} &\sim \prod_{k=1 \atop k \equiv 1,2 (\text{mod}{4})} k,\text{ and}  \\ 
\prod_{t=1}^n B_{t}C_{t} &\sim \prod_{k=1 \atop {k \equiv 2,3 \atop (\text{mod}{4})}}^n (\alpha+k)(\beta+k) \prod_{k=1 \atop {k \equiv 0 \atop (\text{mod}{4})}}^n (\alpha + \beta + k) \prod_{k=n+1 \atop {k \equiv 3 \atop (\text{mod}{4})}}^{2n} (\alpha + \beta + k).
\end{align}
Together, these simplify the calculation of  $\left( -1, \prod_{t=1}^n \Delta_{t}\right)$.  We now turn to the calculation of $\prod_{t=1}^n \left( \Delta_{t-1},\Delta_{t} \right)_p$.

By \cite[Equation (6)]{hajir}, we have
\begin{align} \label{delta_2k}
\prod_{t=1}^n (\Delta_{t-1},\Delta_{t})_p = \prod_{k=1}^{(n-1)/2} (\Delta_{2k},\Delta_{2k-1}\Delta_{2k+1})_p.
\end{align}
It is easily verified from Equations (\ref{ADEF}), (\ref{BDEF}), and (\ref{CDEF}) that
\begin{align}
&A_{2\ell-1}A_{2\ell+1}\sim(n-2\ell),\label{A_odd_id}\\ 
&B_{2\ell-1}B_{2\ell+1}\sim(\alpha+n-2\ell+1)(\beta+n-2\ell+1)(\alpha+\beta+n-2\ell+2),\label{B_odd_id}\\ 
&C_{2\ell-1}C_{2\ell+1}\sim(\alpha+\beta+2n-4\ell+1)(\alpha+\beta+2n-4\ell+3),\label{C_odd_id}
\end{align}
for all $\ell\le (n-1)/2$, and
\begin{align}    
&A_{2\ell}\sim A_{2(\ell-1)}(n-2\ell+1), \label{A_even_id}\\
&B_{2\ell}\sim B_{2(\ell-1)}(\alpha+n-2\ell+2)(\beta+n-2\ell+2)(\alpha+\beta+n-2\ell+3), \label{B_even_id} \\
&C_{2\ell}\sim C_{2(\ell-1)}(\alpha+\beta+2n-4\ell+5)(\alpha+\beta+2n-4\ell+3),\text{and} \label{C_even_id}
\end{align}
for all $\ell\le n/2$. We similarly note that 
\begin{align}
&A_2B_2C_2\sim (n-1)(\alpha+n)(\beta+n)(\alpha+\beta+2n-1). \label{A2B2C2}
\end{align}
Thus using Theorem \ref{main_thm} and Equations (\ref{A_odd_id}), (\ref{B_odd_id}), and (\ref{C_odd_id}), we have
\begin{align} \label{delta_2k_with_ABC}
    \prod_{k=1}^{(n-1)/2} (\Delta_{2k},\Delta_{2k-1}\Delta_{2k+1})_p\sim  \prod_{k=1}^{(n-1)/2}(A_{2k}B_{2k}C_{2k},\,(n-2k)(\alpha+n-2k+1)(\beta+n-2k+1) \notag
    \\\cdot(\alpha+\beta+n-2k+2)(\alpha+\beta+2n-4k+1)(\alpha+\beta+2n-4k+3))_p. 
\end{align}
From Equations (\ref{A_even_id}), (\ref{B_even_id}), and (\ref{C_even_id}) we have
\begin{align} \label{A2kB2kC2k}
    A_{2k}B_{2k}C_{2k}\sim A_2B_2C_2\prod_{\ell=2}^k\big[(n-2\ell+1)(\alpha+n-2\ell+2)(\beta+n-2\ell+2)(\alpha+\beta+n-2\ell+3) \notag
    \\\cdot(\alpha+\beta+2n-4\ell+5)(\alpha+\beta+2n-4\ell+3) \big]
\end{align}
for all $k\in\{1,\ldots,\lfloor(n-1)/2\rfloor\}$. For ease of notation, we define
\begin{align} \label{j(ell)}
    j(\ell)=(n-2\ell+1)(\alpha+n-2\ell+2)(\beta+n-2\ell+2)(\alpha+\beta+n-2\ell+3)
    \\\cdot(\alpha+\beta+2n-4\ell+5)(\alpha+\beta+2n-4\ell+3) \notag
\end{align}
and
\begin{align} \label{m(ell)}
    m(\ell)=(n-2\ell)(\alpha+n-2\ell+1)(\beta+n-2\ell+1)(\alpha+\beta+n-2\ell+2)
    \\\cdot(\alpha+\beta+2n-4\ell+1)(\alpha+\beta+2n-4\ell+3) \notag
\end{align}
for all $\ell\in \mathbf{N}$. Substituting Equations (\ref{A2kB2kC2k}), (\ref{j(ell)}), and (\ref{m(ell)}) into Equation (\ref{delta_2k_with_ABC}) gives
\begin{align}
\prod_{k=1}^{(n-1)/2} (\Delta_{2k},\Delta_{2k-1}\Delta_{2k+1})_p&\sim \prod_{k=1}^{(n-1)/2}(A_2B_2C_2\prod_{\ell=2}^kj(\ell),\,m(k))_p \notag\\
&\sim \bigg(A_2B_2C_2,\,\prod_{k=1}^{(n-1)/2}m(k)\bigg)_p \cdot\prod_{k=2}^{(n-1)/2}\bigg(\prod_{\ell=2}^kj(\ell),\,m(k)\bigg)_p \notag\\
&\sim \bigg(A_2B_2C_2,\,\prod_{k=1}^{(n-1)/2}m(k)\bigg)_p\cdot\prod_{\ell=2}^{(n-1)/2}\bigg(j(\ell),\prod_{k=\ell}^{(n-1)/2}m(k)\bigg)_p \label{delta_2k_rearranged}
\end{align}
using properties of the Hilbert symbol. 

\subsection{Special Cases} \label{special_cases_section} We conclude the paper by giving some sample applications of our method.  First, it is an interesting problem to determine polynomials with specified Galois properties that are also ramified at few primes.  In \cite{hajir}, Hajir considered the polynomials
\[
f_n(x) = \sum_{j=0}^n \frac{n-j+1}{j!}x^j,
\]
and showed that they have Galois group $A_n$, which embeds into an $\widetilde{A}_{n}$-extension of $\Q$, whenever $n \equiv 1 \pmod{8}$.  We now give two explicit examples when $n=5$.  

\begin{exm}
Let $n=5$ and $(\alpha,\beta) = (-18,24)$.  Then we check that $\mathscr{J}_5^{(-18,24)}(x)$ is irreducible with Galois group $A_5$, and $\disc(\mathscr{J}_5^{(-18,24)}(x)) = 29^4/2^{22}$.  Using the explicit formulas of the previous section, we find that
\begin{align*}
\epsilon_p(E) &= (-3\cdot5\cdot13\cdot29,-2\cdot5\cdot11\cdot13)_p \cdot (3\cdot5\cdot11\cdot29,-2\cdot11\cdot13)_p \cdot (-1,2\cdot 5)_p \\
&=\begin{cases} -1 & \text{ if } p \in \lbrace 3,\infty \rbrace, \text{ and} \\
1 & \text{ otherwise.}
\end{cases}
\end{align*}
By Serre's theorem, $E$ does not embed into a quadratic extension with Galois group $\widetilde{A}_5$.
\end{exm}

\begin{exm}
Let $n=5$ and $(\alpha,\beta) = (-29/2,5)$.  Then we check that $\mathscr{J}_5^{(-29/2,5)}(x)$ is irreducible with Galois group $A_5$.  Using the explicit formulas of the previous section, one checks that $\epsilon_p(E) = 1$ for all primes $p$, including $p=\infty$. By Serre’s theorem the splitting field of $\mathscr{J}_5^{(-29/2,5)}(x)$ embeds inside an $\widetilde{A}_5$-extension.
\end{exm}

Finally, we show how to give an alternate proof of Feit's formula for $\Delta_t$ using our approach.  Recall from \cite{feit} the definition of the Generalized Laguerre Polynomials $F_n^{(\lambda)}(x)$:
\[
F_n^{(\lambda)}(x) = \sum_{j=0}^n (-1)^{n-j}\binom{n}{j} \left(\prod_{i=j+1}^n \lambda + i\right)x^j.
\]
Let $\gamma_j^{(\lambda)}(n) = (-1)^{n-j}\binom{n}{j} \prod_{k=j+1}^n(\lambda+k)$.  Then the generating function $G(z)$ is given by
\[
G(z) = \frac{\sum_{j=1}^n j\gamma_j^{(\lambda)}(n)z^{n-j}}{\sum_{j=0}^n \gamma_j^{(\lambda)}(n)z^{n-j}}.
\]

Proposition \ref{cont_frac_prop} does not use any properties specific to Jacobi Polynomials, so we can turn to the key Proposition \ref{f(n-l)_prop}; those formulas must be replaced by 
\begin{align*}
f_{2i}(n-\ell) &= f_{2i}(n-\ell+1) \cdot -(n-i-\ell)(\lambda + n -i-\ell+1),\text{ and} \\
f_{2i+1}(n-\ell) &= f_{2i+1}(n-\ell+1) \cdot -(n-i-\ell)(\lambda + n -i-\ell).
\end{align*}
Following the same sequence of calculations, one finds that $a_0=n$, $a_i = -(\lambda+n)$, and 
\begin{align*}
a_{2p} &=p-n \\
a_{2p+1} &= -(\lambda+n-p). 
\end{align*}
Then 
\begin{align*}
\Delta_t &= s_0^t (a_1a_2)^{t-1} \cdots (a_{2n-3}a_{2n-2}) \\
&=\prod_{k=n-t+1}^n k^{k-(n-t)} \prod_{k=n-t+2}^n (\lambda+k)^{k-(n-t+1)} \\
&=\prod_{j=n-t+1}^n j(j\lambda + j^2)^{j-n+t-1},
\end{align*}
which is exactly Feit's formula in  \cite[Thm.~5.1]{feit}.

\end{document}